\newtheorem{lemma}{Lemma}[section]
\newtheorem{thm}[lemma]{Theorem}
\newtheorem{prop}[lemma]{Proposition}
\newtheorem{cor}[lemma]{Corollary}
\newtheorem*{prop*}{Proposition}
\newtheorem{prop_intro}{Proposition}
\newtheorem{thm_intro}[prop_intro]{Theorem}
\theoremstyle{definition}
\newtheorem{defn_intro}[prop_intro]{Definition}
\newtheorem{defn}[lemma]{Definition}
\theoremstyle{definition}
\newtheoremstyle{citing}% name
{3pt}%      Space above, empty = `usual value'
{3pt}%      Space below
{\itshape}% Body font
{}%         Indent amount (empty = no indent, \parindent = para indent)
{\bfseries}% Thm head font
{.}%        Punctuation after thm head
{.5em}%     Space after thm head: " " = normal interword space;
\theoremstyle{citing}
\newtheorem*{varthm}{}
\newcommand{\matN}{\ensuremath {\mathbb{N}}}
\newcommand{\matR} {\ensuremath {\mathbb{R}}}
\newcommand{\matQ} {\ensuremath {\mathbb{Q}}}
\newcommand{\matZ} {\ensuremath {\mathbb{Z}}}
\newcommand{\matC} {\ensuremath {\mathbb{C}}}
\newcommand{\matH} {\ensuremath {\mathbb{H}}}
\newcommand{\calM} {\ensuremath {\mathcal{M}}}
\newcommand{\inte}[1]{{\rm int}(#1)}
\newcommand{\nota} [1] {\caption{\footnotesize{#1}}}
\newcommand{\eq}{{\rm eq}}
\newcommand{\alt}{{\rm alt}}
\newcommand{\inc}{{\rm inc}}
\newcommand{\thi}{{\rm th}}
\newcommand{\haar}{\textrm{Haar}}
\newcommand{\str} {\ensuremath {{\rm str}}}
\newcommand{\strtil} {\ensuremath {\widetilde{\rm str}}}
\DeclareMathOperator{\reg}{{Reg}}
\DeclareMathOperator{\aut}{{Aut}}
\newcommand{\vare}{\varepsilon}
\DeclareMathOperator{\isom}{Isom}
\DeclareMathOperator{\vol}{Vol}
\newcommand{\commento}[1]{}
\author{Roberto Frigerio}
\address{Dipartimento di Matematica, Largo Pontecorvo 5, 56127 Pisa, Italy}
\email{roberto.frigerio@unipi.it}
\author{Ennio Grammatica}
\address{Ecole Normale Sup\'erieure, 
4 Av. des Sciences,  91190 Gif-Sur-Yvette, France}
\email{ennio.grammagica@ens-paris-saclay.fr}
\author{Bruno Martelli}
\address{Dipartimento di Matematica, Largo Pontecorvo 5, 56127 Pisa, Italy}
\email{bruno.martelli@unipi.it}
\title[Efficient cycles of cusped $3$-manifolds]{Efficient cycles \\ of hyperbolic manifolds}
\keywords{Complexity, stable complexity, Gieseking manifold, Gieseking-like manifold, measure homology}
\begin{document}

\begin{abstract}
Let $N$ be a complete finite-volume hyperbolic $n$-manifold. An efficient cycle for $N$ is the limit (in an appropriate measure space) of a sequence of fundamental cycles whose $\ell^1$-norm converges to the simplicial volume of $N$. 
%Computing the simplicial volume is usually a hard task, and exhibiting an efficient cycle can be even harder. In the case of finite volume hyperbbolic manifolds, 
Gromov and Thurston's smearing construction  
exhibits an explicit efficient cycle, and Jungreis and Kuessner proved that, in dimension $n\geq 3$, such cycle actually is the unique efficient cycle for a huge class of finite volume hyperbolic manifolds, including all the closed ones.
In this paper we prove that, for $n\geq 3$, the class of finite-volume hyperbolic manifolds
for which the uniqueness of the efficient cycle does not hold is exactly the commensurability class of
 the figure-8 knot complement (or, equivalently, of the Gieseking manifold). 
\end{abstract}

\maketitle

\section*{Introduction}

The simplicial volume is a homotopy invariant of manifolds introduced by Gromov in his pioneering paper~\cite{Gromov}. If $N$ is a compact connected oriented $n$-manifold
(possibly with boundary) the simplicial volume $\|N\|$ of $N$ is defined by
$$
\|N\|=\inf \left\{\sum_{i=1}^k |a_i|\, :\, \Big[\sum_{i=1}^k a_i\sigma_i\Big]=[N]\in H_n(N,\partial N)\right\}\ ,
$$
where $[N]$ denotes the real fundamental class of $N$, and $H_n(N,\partial N)$ denotes the relative singular homology module of the pair
$(N,\partial N)$ with real coefficients.

Computing the simplicial volume is usually a very difficult task. Many vanishing theorems are available by now, but positive exact values of the simplicial volume are known
only for a few classes of manifolds, such as complete finite-volume hyperbolic manifolds~\cite{Gromov, Thurston}, closed manifolds isometrically covered by the product of two copies
of the hyperbolic plane~\cite{Bucher3}, some $3$-manifolds with higher genus boundary~\cite{BFP} and special families of $4$-manifolds~\cite{HL}.
Even when the simplicial volume of a manifold $N$ is known, characterizing (or, at least, exhibiting) \emph{almost minimal} fundamental cycles (i.e.~fundamental cycles
whose norm is close to $\|N\|$) may be surprisingly difficult. For example, it is known that the simplicial volume of any closed simply connected manifold $N$ vanishes, but 
there is no recipe, in general, which describes fundamental cycles of $N$ of arbitrarily small norm; in a similar spirit, even if the value of the simplicial volume of the product 
$\Sigma\times \Sigma'$ of two hyperbolic surfaces has been computed in~\cite{Bucher3}, exhibiting a sequence of fundamental cycles whose norm approximates $\|\Sigma\times\Sigma'\|$ 
seems very challenging~\cite{Marasco}.

The situation is better understood for hyperbolic manifolds: the computation by Gromov and Thurston of the simplicial volume of such manifolds explicitly constructs 
almost minimal cycles via 
an averaging operator called \emph{smearing}~\cite{Thurston}.
A natural question is to
which extent this construction is unique, i.e., whether there exist sequences
of almost minimal fundamental cycles which do not come from smearing: this problem has been partially addressed by Jungreis and Kuessner in~\cite{Jungreis, Kue:efficient}. 

In this paper we improve their results by showing that, in dimension $n\geq 3$,  the unique hyperbolic manifolds 
admitting ``exotic''  almost minimal fundamental cycles are those which are commensurable with the Gieseking manifold (it is known that hyperbolic surfaces admit many 
almost minimal efficient cycles which do not come from smearing, see e.g.~\cite[Remark at page 647]{Jungreis}). 

In order to state more precisely our results, let us introduce some notation. Let $N$ be a complete finite-volume hyperbolic $n$-manifold. 
If $N$ is closed, we denote by $\|N\|$ its simplicial volume. If $N$ is
 non-compact, it is the internal part of a compact manifold with boundary $\overline{N}$, and for the sake of simplicity
 we still denote by $\|N\|$ the simplicial volume of $(\overline{N},\partial \overline{N})$. In fact, by replacing finite chains with locally finite ones, the definition of simplicial volume may be
 extended to open manifolds, and for finite-volume hyperbolic manifolds this notion of simplicial volume coincides with the simplicial volume of the compactification 
 (see e.g.~\cite{KimKue}). In order to better compare our results with Kuessner's we prefer to work with the relative simplicial volume of compact manifolds with boundary rather than the simplicial volume of open manifolds, even if our proofs can be easily adapted also to the latter framework.

Let $c_i$, $i\in\mathbb{N}$, be a sequence of (relative) fundamental cycles such that 
$$\lim_{i\to +\infty} \|c_i\|=\|N\|.$$ 
Any possible limit $\mu$ of such a sequence naturally sits
in the space $\mathcal{M}(\overline{S}_n^*(N))$ of signed measures on the space of (nondegenerate and possibly ideal) geodesic simplices in $N$,
and will be called an \emph{efficient cycle} for $N$: thus, an efficient cycle is a measure rather than a classical chain (we refer the reader to Section~\ref{preliminaries}
for more details). In fact, it is not difficult to prove that an efficient cycle $\mu$ is supported on the subspace $\reg(N)$ of \emph{regular ideal} simplices, which may be identified with
$\Gamma\backslash \isom(\mathbb{H}^n)$, where $\Gamma$ is the subgroup of $\isom(\mathbb{H}^n)$ such that $N=\Gamma\backslash \mathbb{H}^n$
(see Lemma~\ref{support}). The Haar measure on $\isom(\mathbb{H}^n)$ may then be exploited to define an uniformly distributed measure $\mu_\eq$ on $\reg(N)$, and Gromov and Thurston's smearing procedure constructs
sequences of fundamental cycles converging exactly to a suitable multiple of $\mu_\eq$.

Jungreis and Kuessner   provided a complete characterization of efficient cycles of finite-volume hyerbolic $n$-manifolds, $n\geq 3$, except that for the
exceptional family consisting of the so-called \emph{Gieseking-like} manifolds. 

\begin{defn_intro}\label{Gieseking:defn}
Let $N=\Gamma\backslash \matH^3$ be a cusped hyperbolic $3$-manifold. Let us fix an identification between $\partial \matH^3$ and the space $\matC\cup \{\infty\}$, and
let $\mathcal{P}=\matQ(e^{i\pi/3})\cup \{\infty\}\subseteq \partial \matH^3$. Then $N$ is \emph{Gieseking-like} if there exists a conjugate $\Gamma'$ of $\Gamma$ in
$\isom(\matH^3)$ such that $\mathcal{P}$ is contained in the set of parabolic fixed points of $\Gamma'$.
\end{defn_intro}

The well-known Gieseking manifold is indeed Gieseking-like. Moreover, being Gieseking-like is invariant with respect to commensurability, hence all hyperbolic $3$-manifolds
which are commensurable with the Gieseking manifold (like, for example, the figure-8 knot complement) are Gieseking-like. It is still unknown whether the class of Gieseking-like manifolds coincides with
the commensurability class of the Gieseking manifold, or it is strictly larger (see~\cite{LongReid}).   

Let $v_n$ be the volume of a regular ideal simplex in hyperbolic space $\mathbb{H}^n$.
We are now ready to state Jungreis' and Kuessner's results:

\begin{thm_intro}[\cite{Jungreis}]\label{Jungreis:thm}
Let $N$ be a closed orientable $n$-hyperbolic manifold with $n\geq 3$. Then $N$ admits a unique efficient cycle, which is given by the measure
$$
\frac{1}{2 v_n} \cdot \mu_\eq\ .
$$
\end{thm_intro}

\begin{thm_intro}[\cite{Kue:efficient}]\label{kuessner:thm}
Let $N$ be a complete finite-volume $n$-hyperbolic manifold, $n\geq 3$, and suppose that $N$ is not Gieseking-like (this condition is automatically satisfied
if $n\geq 4$). Then every efficient cycle of $N$ is a multiple of $\mu_\eq$.
%admits a unique efficient cycle, which is given by the measure
%$$
%\frac{1}{2 v_n} \cdot \mu_\eq\ .
%$$
\end{thm_intro}

In fact, Kuessner 
 states in~\cite[Theorem 4.5]{Kue:efficient} that any efficient cycle is a non-vanishing multiple of $\mu_\eq$, without explicitly computing the proportionality coefficient 
 $1/(2v_n)$ appearing in Jungreis'  Theorem. When $N$ is non-compact,  the  space of straight simplices in $N$ is non-compact, which introduces some issues when dealing with the weak-* convergence of measures (namely,  by passing to the limit there could be some loss of mass).

 Let us say that a measure in $\mathcal{M}(\reg(N))$ is \emph{equidistributed} if it is a multiple of $\mu_\eq$.
The main results of this paper strengthen and clarify Kuessner's result in three directions: 
\begin{enumerate}
\item
We prove
that indeed the total variation of any efficient cycle of a cusped hyperbolic manifold is equal to its simplicial volume, thus 
showing that, in the non-Gieseking like case, 
also for cusped manifolds
the proportionality coefficient between any efficient cycle and $\mu_\eq$ is equal to
 $1/(2v_n)$, as  in Jungreis'  Theorem;
 \item
we show that if a cusped $3$-manifold $N$ admits non-equidistributed efficient cycles,
then it is commensurable with the Gieseking manifold (a condition which is potentially stronger than being Gieseking-like); 
\item moreover, for any such manifold we exhibit non-equidistributed efficient cycles, thus obtaining a complete characterization of hyperbolic manifolds with non-unique efficient cycles.
\end{enumerate}

Let us state more precisely our results:

\begin{thm_intro}[No loss of mass]\label{totalvariation}
Let $N$ be a complete finite-volume hyperbolic $n$-manifold, $n\geq 3$, and let $\mu$ be an efficient cycle for $N$.
Then $\|\mu\|=\|N\|$.
\end{thm_intro}

\begin{thm_intro}\label{characterization}
Let $N$ be a complete finite-volume hyperbolic manifold. Then $N$ admits non-equidistributed efficient cycles if and only if it is commensurable with the Gieseking manifold.
\end{thm_intro}

Putting together Theorems~\ref{totalvariation} and~\ref{characterization} we can then deduce the following:

\begin{thm_intro}\label{summarize}
Let $N$ be a complete finite-volume hyperbolic $n$-manifold with $n\geq 3$. The following hold:
\begin{enumerate}
\item
If $N$ is not commensurable with the Gieseking manifold and $c_i$, $i\in\mathbb{N}$ is any minimizing sequence for $N$, then
$$
\lim_{i\to +\infty} c_i=\frac{1}{2v_n} \mu_\eq\ ;
$$
\item If $N$ is commensurable with the Gieseking manifold, then $N$ admits non-equidistributed efficient cycles.
\end{enumerate}
\end{thm_intro}

We can be more precise. If $N$ is commensurable with the Gieseking manifold, then a finite cover $M$ of $N$ admits a decomposition $T$ into regular ideal tetrahedra. 
The triangulation $T$ induces a measure cycle $\mu_T\in \mathcal{M}(\reg(M))$ which is a finite sum of atomic measures %in $\mathcal{M}(\overline{S}_3^*(N))$ 
supported
on the regular ideal tetrahedra appearing in $T$ (see Subsection~\ref{mu:subsec} for the precise definition of $\mu_T$). We then have the following:

\begin{thm_intro}\label{explicit:thm}
Let $M$ be a complete finite-volume $3$-manifold admitting a decomposition $T$ into regular ideal tetrahedra. Then $\mu_T$ is an efficient cycle for $M$.
\end{thm_intro}
 
 From the non-equidistributed efficient cycle $\mu_T$ for $M$ one can then easily construct a non-equidistributed efficient cycle for $N$.
 The proof of Theorem~\ref{explicit:thm} exploits a construction described in~\cite{FFM}, which allows to replace an ideal triangulation  $T$ of a cusped manifold with a classical triangulation
 of its compactification  in a very controlled way. By applying this procedure to a suitably chosen tower of coverings of $M$ and pushing-forward the resulting classical triangulations to $M$ we obtain a minimizing sequence whose limit is equal to $\mu_T$.

\subsection*{Plan of the paper}
In Section~\ref{preliminaries} we recall the definition of simplicial volume, of minimizing sequence, of efficient cycle and of equidistributed efficient cycle. To this aim we also introduce the  measure spaces we will exploit throughout the paper. Section~\ref{properties} is devoted to the proof of some fundamental properties of efficient cycles, including Theorem~\ref{totalvariation}.
In Section~\ref{unique:sec} we prove that, if a complete finite-volume hyperbolic $n$-manifold, $n\geq 3$, admits a non-equidistributed efficient cycle, then
it is necessarily commensurable with the Gieseking manifold, while Section~\ref{nonunique:sec} is devoted to the construction of non-equidistributed efficient cycles for
manifolds which are commensurable with the Gieseking manifold.

\section{Preliminaries}\label{preliminaries}

\subsection{Simplicial volume}
Let $X$ be a topological space. For every $k\in\mathbb{N}$, we denote by $\hat{S}_k(X)$ the set of singular $k$-simplices with values in $X$, and by
$C_k(X)$ the chain module of singular $k$-chains with \emph{real} coefficients, i.e.~the real vector space with free basis $\hat{S}_k(X)$.
If  $Y\subseteq X$, we denote by $C_*(X,Y)$ the chain complex of relative singular cochains with real coefficients, and by $H_*(X,Y)$ the corresponding homology module. 
We endow $C_*(X)$ with an $\ell^1$-norm $\|\cdot\|$ defined by
$$
\left\| \sum_{i=1} a_i\sigma_i\right\|=\sum_{i=1}^k |a_i|\ .
$$
This norm descends to a norm on $C_*(X,Y)$ and, by taking the infimum over representatives, to a seminorm on $H_*(X,Y)$, still denoted by $\|\cdot \|$. 

If $N$ is a compact oriented $n$-dimensional manifold (possibly with boundary), then the singular homology module with integral coefficients $H_n(N,\partial N;\mathbb{Z})\cong \mathbb{Z}$
is generated by the \emph{integral fundamental class} $[N]_\mathbb{Z}\in H_n(N,\partial N,\mathbb{Z})$. Under the change of coefficient homomorphism induced by
the inclusion $\mathbb{Z}\hookrightarrow \mathbb{R}$,
the class $[N]_\mathbb{Z}$ is sent to the \emph{real fundamental class} $[N]\in H_n(N,\partial N;\mathbb{R})$. 

\begin{defn}[{\cite{Gromov}}]
The simplicial volume of $N$ is 
$$
\|N\|=\| [N]\|\ .
$$
\end{defn}

Henceforth,  all the homology modules will be understood with real coefficients, and the coefficients will be omitted from our notation.

\subsection{Straight chains on hyperbolic manifolds}
Let $N=\Gamma\backslash \matH^n$ be a cusped oriented hyperbolic $n$-manifold, where $\Gamma$ is a discrete subgroup of $\isom^+(\matH^n)$. For every $k\in\mathbb{N}$,
if $\sigma\colon \Delta^k\to \matH^n$ is a singular simplex, we denote by $\strtil_k(\sigma)$ the \emph{straightening} of $\sigma$, i.e.~the singular simplex
obtained by suitably  parametrizing the convex hull of the vertices of $\sigma$ (see e.g.~\cite[Section 8.7]{frigerio:book} or~\cite[Chapter III.13]{martelli:book}). We denote by $S_k(\matH^n)\subseteq \hat{S}_k(\matH^n)$ the image 
of $\strtil_k$, i.e.~the subset of \emph{straight} hyperbolic $k$-simplices, and observe that there is a natural identification $S_k(\matH^n)=(\matH^n)^{k+1}$ sending a straight simplex to
the (ordered) set of its vertices, which will be understood henceforth. With a slight abuse, we still denote by $\strtil_k\colon C_k(\matH^n)\to C_k(\matH^n)$ the $\mathbb{R}$-linear extension of $\strtil_k$ to the space of singular chains,
and recall that $\strtil_*\colon C_*(\matH^n)\to C_*(\matH^n)$ is in fact a chain map (see e.g.~\cite[Proposition 8.11]{frigerio:book}). 

Being $\isom(\matH^n)$-equivariant (and equivariantly homotopic to the identity),
the map $\strtil_k$ is in particular $\Gamma$-equivariant, and  induces a well-defined chain map $\str_*\colon C_*(N)\to C_*(N)$, which is chain homotopic to the identity. 
We denote by
$S_k(N)$ the image of $\hat{S}_k(N)$ via $\str_k$, i.e.~the set of straight simplices in $N$, and we observe that there is a natural identification
$$S_k(N)=\Gamma\backslash S_k(\matH^n)=\Gamma\backslash (\matH^n)^{k+1}\ .$$
A chain is called \emph{straight} if it is supported on straight simplices or, equivalently, if it lies in the image of the chain map $\str_*$ (or $\strtil_*$).

We denote by $SC_*(\matH^n)\subseteq C_*(\matH^n)$ (resp.~$SC_*(N)\subseteq C_*(N)$) the complex of straight chains in $\matH^n$ (resp.~in $N$). By construction,  under the above identification
between the set of straight $k$-simplices and $(\matH^n)^{k+1}$ (resp. $\Gamma\backslash (\matH^n)^{k+1}$), 
the complex $SC_*(\matH^n)$  (resp.~$SC_*(N)$) is identified with the free vector space with basis  $(\matH^n)^{*+1}$ (resp.~$\Gamma\backslash (\matH^n)^{*+1}$), with boundary operators which linearly extend the maps
$$
\partial_k (v_0,\dots,v_k)=\sum_{i=0}^k (-1)^i (v_0,\dots,\widehat{v}_i,\dots,v_k)
$$
(resp. $\partial_k [(v_0,\dots,v_k)]=\sum_{i=0}^k (-1)^i [(v_0,\dots,\widehat{v}_i,\dots,v_k)]$). 

If $\sigma=(v_0,\dots,v_k)\in(\matH^n)^{k+1}$ is a straight simplex, the alternating chain associated to $\sigma$ is defined by
$$
\alt_k(\sigma)=\frac{1}{k!} \sum_{\tau\in\mathfrak{S}_{k+1}} \varepsilon(\tau)\cdot (v_{\tau(0)},\dots, v_{\tau(k)})\ ,
$$
where $\mathfrak{S}_{k+1}$ is the group of the permutations of the set $\{0,\dots,k\}$, and $\varepsilon(\tau)=\pm 1$ is the sign of $\tau$ for every $\tau \in \mathfrak{S}_{k+1}$.
The maps $\alt_k$ linearly extend to a chain map $\alt_*\colon SC_*(\matH^n)\to SC_*(\matH^n)$ which is $\Gamma$-equivariant, and $\Gamma$-equivariantly homotopic
to the identity (see e.g.~\cite[Appendix A]{Fujiwara_Manning}). In particular, $\alt_*$ induces a well-defined chain map
$\alt_*\colon SC_*(N)\to SC_*(N)$, which is homotopic to the identity. A chain in $SC_*(\matH^n)$ (or in $SC_*(N)$) is \emph{alternating}
if it lies in the image of $\alt_*$.

\subsection{Thick-thin decomposition of hyperbolic manifolds}
For every $\varepsilon>0$ we denote by $N_\varepsilon$ the $\vare$-thick part of $N$, i.e.~the set of points of $N$
whose injectivity radius is not smaller than $\vare$. We will always choose $\varepsilon>0$ small enough so that $N_\vare$ is a compact submanifold with boundary of $N$, 
obtained from $N$ by removing open neighbourhoods of its cusps. We denote by $\overline{N}$ the natural compactification of $N$, which is diffeomorphic to $N_\vare$. 
The inclusion $(N_\vare,\partial N_\vare)\to (N,N\setminus \inte{N_\vare})$ and the obvious deformation retraction $r\colon  (N,N\setminus \inte{N_\vare})\to (N_\vare,\partial N_\vare)$ are one the homotopy inverse of the other, and they induce norm non-increasing maps in homology. Therefore, in order to compute the simplicial volume of $\overline{N}$ we may consider
relative fundamental cycles in $C_n(N,N\setminus \inte{N_\vare})$. Morevover,
the complement in $\matH^n$ of the preimage of $N_\vare$ under the covering projection is an equivariant family of disjoint open horoballs. Since horoballs are convex in $\matH^n$,
the straightening operator induces a well-defined chain map on the relative chain complex $C_*(N,N\setminus \inte{N_\vare})$. 
Finally, since both the straightening  and the alternating operators are obviously norm non-increasing (and they induce the identity also on relative homology), in order to  compute the simplicial volume of $N$
it is not restrictive to consider only straight and alternating relative cycles in $C_*(N,N\setminus \inte{N_\vare})$.

\subsection{Minimizing sequences and efficient cycles}
We say that a sequence $c_i\in C_n(N)$ of chains is a \emph{minimizing sequence} if the following conditions hold:
\begin{enumerate}
\item Each $c_i$ is straight and alternating;
\item For every sufficiently large $i\in\mathbb{N}$, the chain $c_i$ is a relative cycle in $C_n(N_{2^{-i}}, N\setminus \inte{N_{2^{-i}}})$;
\item Under the identification $H_n(N_{2^{-i}}, N\setminus \inte{N_{2^{-i}}})\cong 
%H_n(N_{2^{-i}}, \partial N_{2^{-i}})\cong H_n(\overline{N},\partial \overline{N})$ provided by excision
%and by the identification $(N_{2^{-i}}, \partial N_{2^{-i}})\cong
H_n (\overline{N},\partial \overline{N})$ described above, the relative cycle $c_i$ represents
the fundamental class of $(\overline{N},\partial \overline{N})$;
\item $\|c_i\|\leq \|N\|+2^{-i}$ for all sufficiently large $i\in\mathbb{N}$. 
\end{enumerate}

Of course, in the definition of minimizing sequence the values $2^{-i}$ may be replaced by any infinitesimal sequence $\eta_i$; we decided to choose this
specific sequence just to simplify the notation.% in the estimates we are going to investigate.

We now introduce the measure spaces we are interested in. Recall that $S_n(N)=\Gamma\backslash ({\matH^n})^{n+1}$ is the space of straight simplices with values in $N$. 
Of course, this space does not contain any ideal simplex,  hence we need to enlarge it in order to construct a measure space which could support possible limits of minimizing sequences (recall from the introduction that efficient cycles are supported on regular ideal simplices). 
The natural space to look at is then  $\overline{S}_n(N)=\Gamma\backslash (\overline{\matH^n})^{n+1}$ but, unfortunately, the action
of $\Gamma$ on $\overline{S}_n(\matH^n)=(\overline{\matH^n})^{n+1}$ has not closed orbits, so that the quotient space is not Hausdorff. In order to avoid this inconvenience, and for other
later purposes,
we introduce the following:

\begin{defn}
A simplex in $\overline{S}_n(\mathbb{H}^n)$ is \emph{degenerate} if its vertices (hence, its image) lie on (the closure at infinity of) a hyperplane of $\mathbb{H}^n$ or,
equivalently, if its image has volume equal to $0$. A simplex in $ \overline{S}_n(N)=\Gamma\backslash \overline{S}_n(\mathbb{H}^n)$ is degenerate if it is the image of
a degenerate simplex in $\overline{S}_n(\mathbb{H}^n)$. 

We denote by $\overline{S}^*_n(\mathbb{H}^n)$ (resp.~$\overline{S}^*_n(N)$) the set of nondegenerate simplices in 
$\overline{S}_n(\mathbb{H}^n)$ (resp.~$\overline{S}_n(N)$) .
\end{defn}

%denote by $\Delta$ the big diagonal (si chiama cos\`i\ ?) of $({\partial \mathbb{H}^n})^{n+1}$, i.e.~the set
%$$
%\Delta=\{(v_0,v_1,\dots,v_n)\in (\partial{\mathbb{H}^n})^{n+1}\, |\, v_i=v_j\ \text{for some}\ i\neq j\}
%$$
%and we set
%$$
%\overline{S}_n^*(N)=\Gamma\backslash \left((\overline{\matH^n})^{n+1}\setminus \Delta\right)\ .
%$$

It is not difficult to show that, when endowed with the quotient topology, the space $\overline{S}_n^*(N)$ is Hausdorff and locally compact (see e.g.~\cite[Lemma 2.6]{Kue:efficient} for a similar result). 
We denote by $\mathcal{M}(\overline{S}_n^*(N))$ the space of signed regular measures on $\overline{S}_n^*(N)$.
% which can be canonically identified with the space
%of $\Gamma$-invariant regular signed measures on $(\overline{\matH^n})^{n+1}\setminus \Delta$. 
If $\sigma\colon \Delta_n\to N$ is a straight simplex,
then we denote by $\delta_\sigma \in \mathcal{M}(\overline{S}_n^*(N))$ the atomic measure concentrated on  $\sigma$.
% as follows: if $\widetilde{\sigma}$ is any lift of $\sigma$ to $\matH^n$
%and $(v_0,\dots,v_n)$ are the vertices of $\widetilde{\sigma}$, then $\delta_\sigma$ is the Dirac measure concentrated on the class
%of $(v_0,\dots,v_n)$ in $\Gamma\backslash (\matH^n)^{n+1}\subseteq \Gamma\backslash( (\overline{\matH^n})^{n+1}\setminus \Delta)$. 
The map $\sigma\mapsto \delta_\sigma$ linearly extends to a map
$$
\Theta\colon SC_n(N) \to \mathcal{M}(\overline{S}_n^*(N))\ .
$$

%\todo{Si potrebbe definire $\delta_\sigma$ anche solo per i $\sigma$ straight, per\`o dovremmo poi introdurre il complesso delle catene straight. Potrebbe darsi che valga la pena farlo, vedremo. In alternativa, nella definizione di successione minimizzante potremmo togliere l'ipotesi straight, che comunque viene sostanzialmente imposta qui con la mappa $\delta$}

We are now ready to define the notion of efficient cycle for complete finite-volume hyperbolic manifolds:

\begin{defn}\label{efficient:defn}
A measure $\mu\in \mathcal{M}(\overline{S}_n^*(N))$ is an \emph{efficient cycle} for $N$ if 
there exists a minimizing sequence $c_i$, $i\in\matN$ such that 
$$
\mu =\lim_{i\to +\infty} \Theta (c_i)\ ,
$$
where the limit is taken with respect to the weak-* topology on $ \mathcal{M}(\overline{S}_n^*(N))$.
\end{defn}

\subsection{Equidistributed measure cycles}
As explained in the introduction, we are going to prove that, if $N$ is \emph{not} commensurable with the Gieseking manifold, then there exists a unique efficient cycle for $N$, which is concentrated on (classes of) regular ideal simplices,
and is moreover equidistributed on such simplices. Let us formally describe what we mean by equidistributed measure on (classes of) regular ideal simplices. 

%Finally, we concentrate our attention on the subset of $\overline{S}_3^*(M)$ corresponding to regular ideal tetrahedra, i.e. 
We define 
$$\reg({\matH^n})=\{(v_0,\dots,v_n)\in(\partial \matH^n)^{n+1}\, |\,  v_0,\dots,v_{n}\ \text{span a regular ideal  simplex}\}$$
and we denote by $\reg^+({\matH^n})$ (resp.~$\reg^-(\partial{\matH^n})$) the subset of $\reg({\matH^n})$ corresponding 
to positively oriented (resp.~negatively oriented) simplices. 
We then set 
$$
\reg^\pm(N)=\Gamma\backslash \reg^\pm ({\matH^n})\subseteq \overline{S}_n^*(N)\ .
$$
Since $N$ is oriented, elements of $\Gamma$ are orientation-preserving, hence the sets $\reg^+(N)$ are $\reg^-(N)$ are disjoint. 

Let $\Delta_0=(v_0,\dots,v_n)\in(\partial \matH^n)^{n+1}$ be the (ordered) $(n+1)$-tuple of vertices of a fixed positively oriented regular ideal hyperbolic simplex. We then have bijections
$$
\isom^\pm (\matH^n)\to \reg^\pm({\matH^n})\, ,\qquad g\mapsto g\cdot \Delta_0=(g(v_0),\dots,g(v_n))\ ,
$$
which induce bijections
$$
\Gamma\backslash \isom^\pm(\matH^n)\to \reg^\pm (N)\ .
$$
We denote by the symbol $\haar$ the Haar measure on $\isom (\matH^n)$, normalized in such a way that, for every measurable subset $\Omega\subseteq \matH^n$ and any $x_0\in\matH^n$, 
$$
\haar \{ g\in \isom(\matH^n)\, |\, g(x_0)\in\Omega\}= \vol(\Omega)\ .
$$
Being bi-invariant, the Haar measure induces well-defined finite  measures 
$\haar_\pm$ on $\Gamma/\isom^\pm(\matH^n)$, hence on  $\reg^\pm(N)$ via the above identifications. 
We finally set 
$$
\mu_\eq=\haar_+-\haar_-\ \in\ \mathcal{M}(\reg(N))\ \subseteq \ \mathcal{M}(\overline{S}_n^*(N))\ ,
$$
where the subscript ``eq'' stands for ``equidistributed''.
Using again the bi-invariance of $\haar$ one can easily
check that the definition of $\haar_\pm$ (hence of $\mu_\eq$) on $\reg({\matH^n})$ does not depend on the choice of $\Delta_0$.

\section{Some properties of efficient cycles}\label{properties}
For every $\varepsilon>0$ we denote by $\omega_\varepsilon\colon C_n(N, N\setminus \inte{N_\vare})$ the restriction of the volume cochain to $N_\vare$, i.e.~the cochain such that
$$
\omega_\vare (c) =\int_c d\vol_\vare\ ,
$$
where $d\vol_\vare$ is the (discontinuous) $n$-form that coincides with the hyperbolic volume form on $N_\vare$ and is equal to $0$ on $N\setminus N_\vare$ (for our purposes, it is sufficient to define $\omega_\vare$ on straight chains, which of course are $C^1$, so the the integral above makes sense). If $c\in C_n(N)$
is a straight relative fundamental cycle for $(N, N\setminus \inte{N_\vare})$, then we have
$$
\omega_\vare (c)=\vol (N_\vare)\ .
$$
If $\sigma$ is a straight simplex with values in $N$, then it is immediate to check that $\omega_\vare(\sigma)=\pm \vol (\widetilde{\sigma}\cap \widetilde{N}_\vare)$,
where $\widetilde{\sigma}$ is a lift of $\sigma$ to $\mathbb{H}^n$, the space $\widetilde{N}_\vare$ is the preimage of $N_\vare$ in $\mathbb{H}^n$,
and the sign is positive (resp.~negative) if $\sigma$ is positively oriented (resp~negatively oriented). 

The following lemma shows that, in a minimizing sequence, the orientation of simplices has to be coherent with the sign of their coefficients, at least asymptotically.

\begin{lemma}\label{nonegative:lemma}
Let $c_i$, $i\in\mathbb{N}$ be a minimizing sequence, and for every $i\in\mathbb{N}$ let
$$
c_i=\sum_{k=1}^{n_i} a_{i,k} \sigma_{i,k}\ . 
$$ 
For every $i,k$, set $b_{i,k}=a_{i,k}$ if $a_{i,k}>0$ and $\sigma_{i,k}$ is not positively oriented or $a_{i,k}<0$ and $\sigma_{i,k}$
is not negatively oriented, and $b_{i,k}=0$ otherwise. 
If $c_i'=\sum_{k=1}^{n_i} b_{i,k}\sigma_{i,k}$, then
$$
\lim_{i\to +\infty} \|c'_i\|=0\ .
$$
%Then 
%$$
%\sum_{k=1}^{n_i} |a_{i,k}| \vol( {\rm Im}(\sigma_{i,k})\cap N_{2^{-i}})\geq \vol (N_{2^{-i}})\ .
%$$
%Moreover, 
\end{lemma}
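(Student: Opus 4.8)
The plan is to use the volume cochain $\omega_{2^{-i}}$ as a detector that penalizes exactly the ``badly oriented'' simplices collected in $c_i'$. Write $\vare=2^{-i}$ and set $v_{i,k}=\vol(\widetilde{\sigma}_{i,k}\cap \widetilde{N}_\vare)\ge 0$, so that $\omega_\vare(\sigma_{i,k})=\pm v_{i,k}$, with the sign equal to the orientation of $\sigma_{i,k}$. Since each $c_i$ is a straight relative fundamental cycle for $(N_{2^{-i}},N\setminus\inte{N_{2^{-i}}})$, I would start from the identity
$$
\vol(N_{2^{-i}})=\omega_{2^{-i}}(c_i)=\sum_{k=1}^{n_i} a_{i,k}\,\omega_{2^{-i}}(\sigma_{i,k})\ .
$$

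The first key step is a sign bookkeeping matching the four cases to the definition of $b_{i,k}$. For a term \emph{not} collected in $c_i'$ (i.e.\ $b_{i,k}=0$), the simplex $\sigma_{i,k}$ is nondegenerate and its orientation agrees with the sign of $a_{i,k}$, so $a_{i,k}\,\omega_{2^{-i}}(\sigma_{i,k})=|a_{i,k}|\,v_{i,k}\ge 0$; for a term with $b_{i,k}=a_{i,k}\ne 0$ one has either a mismatched orientation, giving $a_{i,k}\,\omega_{2^{-i}}(\sigma_{i,k})=-|a_{i,k}|\,v_{i,k}\le 0$, or a degenerate simplex, for which $v_{i,k}=0$. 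Hence only the ``good'' terms contribute positively, and discarding the nonpositive contribution of the remaining ones yields
$$
\vol(N_{2^{-i}})\le \sum_{k:\, b_{i,k}=0} |a_{i,k}|\,v_{i,k}\ .
$$

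The second key step is the volume bound $v_{i,k}\le v_n$: the intersection $\widetilde{\sigma}_{i,k}\cap\widetilde{N}_\vare$ is contained in the straight simplex $\widetilde{\sigma}_{i,k}$, whose volume is at most $v_n$ by the maximality of the regular ideal simplex (Haagerup--Munkholm), the same input underlying Gromov and Thurston's computation. Inserting this and recalling $\sum_k|a_{i,k}|=\|c_i\|$ while $\sum_{k:\,b_{i,k}=0}|a_{i,k}|=\|c_i\|-\|c_i'\|$, I obtain
$$
\vol(N_{2^{-i}})\le v_n\big(\|c_i\|-\|c_i'\|\big),\qquad\text{equivalently}\qquad \|c_i'\|\le \|c_i\|-\frac{\vol(N_{2^{-i}})}{v_n}\ .
$$

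Finally I would pass to the limit, using the proportionality $\|N\|=\vol(N)/v_n$, property (4) of a minimizing sequence in the form $\|c_i\|\le\vol(N)/v_n+2^{-i}$, and the exhaustion fact $\vol(N_{2^{-i}})\to\vol(N)$, to get
$$
0\le \|c_i'\|\le 2^{-i}+\frac{\vol(N)-\vol(N_{2^{-i}})}{v_n}\xrightarrow[i\to\infty]{}0\ .
$$
The argument is essentially the ``volume form measures efficiency'' principle, so the only delicate points I expect are bookkeeping: getting the four sign cases to line up with $b_{i,k}$ (in particular noting that degenerate simplices are harmless since $v_{i,k}=0$), and confirming that the two external inputs, $\|N\|=\vol(N)/v_n$ and $\vol(N_{2^{-i}})\to\vol(N)$, are indeed at hand. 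The genuinely substantive ingredient is the uniform bound $v_{i,k}\le v_n$; everything else is routine.
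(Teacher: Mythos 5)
Your proposal is correct and follows essentially the same route as the paper: both compare $\omega_{2^{-i}}(c_i)=\vol(N_{2^{-i}})$ with the $\ell^1$-norm via the bound $|\omega_{2^{-i}}(\sigma)|\le v_n$, observe that the terms collected in $c_i'$ contribute nonpositively to the volume integral (degenerate simplices contributing zero), and conclude from $\vol(N_{2^{-i}})/v_n\le \|c_i\|-\|c_i'\|$ together with $\|c_i\|\to\|N\|=\vol(N)/v_n$. The only difference is presentational: you carry out the four-case sign bookkeeping termwise, whereas the paper packages it as $\omega_{2^{-i}}(c_i')\le 0$ and $\omega_{2^{-i}}(c_i-c_i')\le \|c_i-c_i'\|\cdot v_n$.
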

\begin{proof}
By definition of minimizing sequence we have
$$
\lim_{i\to +\infty} \omega_{2^{-i}}(c_i)=\lim_{i\to +\infty} \vol (N_{2^{-i}})=\vol (N)\ ,
$$
hence
\begin{equation}\label{nonegative}
\lim_{i\to +\infty} \frac{\omega_{2^{-i}}(c_i)}{v_n}=\frac{\vol (N)}{v_n}=\|N\|=\lim_{i\to +\infty} \|c_i\|\ .
\end{equation}
Since the hyperbolic volume of any straight simplex is at most $v_n$, we have
$$
\omega_{2^{-i}} (c_i-c'_i)\leq \|c_i-c'_i\|\cdot v_n\ ,
$$
while our definition of $c_i'$ readily implies that
$
\omega_{2^{-i}}(c_i')\leq 0
$.
Therefore, 
$$
\frac{\omega_{2^{-i}}(c_i)}{v_n}=\frac{\omega_{2^{-i}}(c_i-c'_i)+\omega(c'_i)}{v_n}\leq \|c_i-c'_i\|=\|c_i\|-\|c_i'\|\ .
$$
The conclusion follows from this inequality and Equation~\eqref{nonegative}.
\end{proof}

A very similar argument shows that the volume of ``most'' simplices appearing in minimizing sequences converges to $v_n$. We properly state and prove this result, since we will need it later.

\begin{lemma}\label{almost-regular}
Let $c_i$, $i\in\mathbb{N}$ be a minimizing sequence, and for every $i\in\mathbb{N}$ let
$$
c_i=\sum_{k=1}^{n_i} a_{i,k} \sigma_{i,k}\ . 
$$ 
Let $\varepsilon>0$ be fixed, and,
for every $i,k$, set $b_{i,k}=a_{i,k}$ if the hyperbolic volume of a lift of $\sigma_{i,k}$ to $\mathbb{H}^n$ is smaller that $v_n-\vare$,  and $b_{i,k}=0$ otherwise. 
If $c_i'=\sum_{k=1}^{n_i} b_{i,k}\sigma_{i,k}$, then
$$
\lim_{i\to +\infty} \|c'_i\|=0\ .
$$
\end{lemma}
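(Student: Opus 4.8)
The plan is to run essentially the same argument as in the proof of Lemma~\ref{nonegative:lemma}, with the ``orientation defect'' there replaced by a ``volume defect''. As in that proof, the two facts I will exploit are consequences of the definition of minimizing sequence together with the Gromov--Thurston identity $\vol(N)=v_n\|N\|$: since $c_i$ is a relative fundamental cycle for the relevant pair we have $\omega_{2^{-i}}(c_i)=\vol(N_{2^{-i}})\to \vol(N)$, so that
$$
\lim_{i\to+\infty}\frac{\omega_{2^{-i}}(c_i)}{v_n}=\|N\|=\lim_{i\to+\infty}\|c_i\|,
$$
which is exactly Equation~\eqref{nonegative}. The whole point will be to bound $\omega_{2^{-i}}(c_i)$ from above in a way that charges an explicit penalty to the simplices of small volume.

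The key step is a per-simplex estimate. For each $i,k$ we have $\omega_{2^{-i}}(\sigma_{i,k})=\pm\vol(\widetilde\sigma_{i,k}\cap\widetilde N_{2^{-i}})$, the sign being the orientation sign of $\sigma_{i,k}$. I claim that, regardless of the sign of $a_{i,k}$,
$$
\omega_{2^{-i}}(a_{i,k}\sigma_{i,k})\leq |a_{i,k}|\cdot \vol(\widetilde\sigma_{i,k}).
$$
Indeed, if the sign of $a_{i,k}$ agrees with the orientation sign then the left-hand side equals $|a_{i,k}|\,\vol(\widetilde\sigma_{i,k}\cap\widetilde N_{2^{-i}})\leq |a_{i,k}|\,\vol(\widetilde\sigma_{i,k})$, using that the truncation to $\widetilde N_{2^{-i}}$ only decreases the volume; if the signs disagree then the left-hand side is nonpositive, hence trivially bounded by the right-hand side. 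Thus both the truncation $\widetilde\sigma_{i,k}\cap\widetilde N_{2^{-i}}$ and a possible sign mismatch only help the upper bound.

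Summing this estimate over $k$ and splitting the index set according to whether $\vol(\widetilde\sigma_{i,k})<v_n-\vare$ (the simplices of $c_i'$) or $\vol(\widetilde\sigma_{i,k})\leq v_n$ (the rest), I get
$$
\omega_{2^{-i}}(c_i)\leq \|c_i'\|(v_n-\vare)+(\|c_i\|-\|c_i'\|)v_n=v_n\|c_i\|-\vare\|c_i'\|,
$$
so that, after dividing by $v_n$ and rearranging,
$$
\frac{\vare}{v_n}\,\|c_i'\|\leq \|c_i\|-\frac{\omega_{2^{-i}}(c_i)}{v_n}.
$$
By the two limits recalled above the right-hand side tends to $\|N\|-\|N\|=0$, whence $\|c_i'\|\to 0$, as desired.

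I do not expect a genuine obstacle here: the argument is a routine adaptation of the previous lemma. The only point that requires care is the per-simplex inequality, namely checking that neither the truncation to $\widetilde N_{2^{-i}}$ nor a discrepancy between $a_{i,k}$ and the orientation of $\sigma_{i,k}$ can spoil the bound, so that the deficit $\vare$ carried by each small simplex genuinely survives as the strictly negative term $-\vare\|c_i'\|$ on the right. (One could, if preferred, first discard the wrongly oriented simplices using Lemma~\ref{nonegative:lemma} and then restrict to correctly oriented ones, but this is not necessary for the estimate above.)
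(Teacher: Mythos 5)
Your proof is correct and follows essentially the same route as the paper: split $c_i$ into the low-volume part $c_i'$ and the rest, bound $\omega_{2^{-i}}(c_i)$ above by $v_n\|c_i\|-\vare\|c_i'\|$, and compare with $\omega_{2^{-i}}(c_i)=\vol(N_{2^{-i}})\to v_n\|N\|$ and $\|c_i\|\to\|N\|$. The only cosmetic difference is that the paper phrases the upper bound in terms of $|\omega_{2^{-i}}(c_i)|$ while you bound the signed quantity directly via the per-simplex estimate, which amounts to the same thing.
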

 \begin{proof}
 %By definition of minimizing sequence we have
%$$
%\lim_{i\to +\infty} \omega_{2^{-i}}(c_i)=\lim_{i\to +\infty} \vol (N_{2^{-i}})=\vol (N)\ ,
%$$
%hence
%\begin{equation}\label{nonegative}
%\lim_{i\to +\infty} \frac{\omega_{2^{-i}}(c_i)}{v_n}=\frac{\vol (N)}{v_n}=\|N\|=\lim_{i\to +\infty} \|c_i\|\ .
%\end{equation}
%Recall from the proof of the previous lemma that $\lim_{i\to +\infty} \frac{\omega_{2^{-i}}(c_i)}{v_n}=\|N\|$

Since the hyperbolic volume of any straight simplex is at most $v_n$, our definition of $c'_i$ implies that 
$$
|\omega_{2^{-i}}(c_i)|\leq |\omega_{2^{-i}} (c_i-c'_i)|+|\omega(c_i')|\leq \|c_i-c'_i\|\cdot v_n+ \|c'_i\|(v_n-\vare)=\|c_i\|v_n-\|c'_i\|\vare\ ,
$$
whence
$$
\|N\|=\lim_{i\to +\infty} \frac{|\omega_{2^{-i}}(c_i)|}{v_n}\leq \lim_{i\to +\infty} \|c_i\| -\frac{\vare}{v_n}\limsup_{i\to +\infty} \|c'_i\|=\|N\|-\frac{\vare}{v_n}\limsup_{i\to +\infty} \|c'_i\|\ .
$$
The conclusion follows.
 \end{proof}

The previous lemma may be exploited to prove that efficient cycles are supported on regular ideal simplices: 
%(we refer the reader to
%\cite[Lemma 3.5]{Kue:efficient} for a detailed proof) :
\begin{lemma}[{\cite[Lemma 3.5]{Kue:efficient}}] \label{support}
Let $\mu$ be an efficient cycle for $N$. Then $\mu$ is supported on $\reg(N)\subseteq \overline{S}^*(N)$. 
\end{lemma}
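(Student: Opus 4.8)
The plan is to deduce the support statement from Lemma~\ref{almost-regular}, the continuity of the volume function, and the classical theorem of Haagerup and Munkholm that regular ideal simplices are the unique maximizers of hyperbolic volume. First I would record that $\vol\colon\overline{S}_n^*(N)\to(0,v_n]$ is continuous (the volume of a nondegenerate geodesic simplex depends continuously on its vertices in $\overline{\matH^n}$) and that $\vol(\sigma)=v_n$ holds if and only if $\sigma$ is regular ideal, so that $\reg(N)=\vol^{-1}(v_n)$ is closed in $\overline{S}_n^*(N)$ and its complement $U=\{\sigma:\vol(\sigma)<v_n\}$ is open. Since the support of $|\mu|$ is contained in $\reg(N)$ exactly when $\mu$ vanishes on $U$, it suffices to show that $\int f\,d\mu=0$ for every $f\in C_c(\overline{S}_n^*(N))$ with $\operatorname{supp} f\subseteq U$.

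Next, given such an $f$, I would set $K=\operatorname{supp} f$; since $K$ is compact and contained in $U$, continuity of $\vol$ yields some $\vare>0$ with $\vol(\sigma)\le v_n-\vare$ for all $\sigma\in K$. Writing $\mu=\lim_i\Theta(c_i)$ with $c_i=\sum_k a_{i,k}\sigma_{i,k}$ a minimizing sequence, weak-* convergence tested against the compactly supported continuous function $f$ gives $\int f\,d\mu=\lim_i\sum_k a_{i,k}f(\sigma_{i,k})$, where only the simplices $\sigma_{i,k}$ lying in $K$ (hence with $\vol(\sigma_{i,k})\le v_n-\vare$) contribute to the right-hand sum.

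I would then invoke Lemma~\ref{almost-regular} with parameter $\vare/2$: the subchain $c_i'$ of $c_i$ consisting of the simplices of volume smaller than $v_n-\vare/2$ satisfies $\|c_i'\|\to 0$. Every simplex contributing to $\sum_k a_{i,k}f(\sigma_{i,k})$ has volume at most $v_n-\vare<v_n-\vare/2$, and hence appears in $c_i'$, so $\bigl|\sum_k a_{i,k}f(\sigma_{i,k})\bigr|\le\|f\|_\infty\,\|c_i'\|\to 0$ and therefore $\int f\,d\mu=0$. As $f$ ranges over all of $C_c(U)$ this forces $|\mu|(U)=0$, which is precisely the assertion that $\mu$ is supported on $\reg(N)$.

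The two genuine inputs — and the only places where care is needed — are the continuity of $\vol$ on the partially-ideal simplex space $\overline{S}_n^*(N)$ (needed to produce the uniform gap $\vare$ on the compact set $K$) and the Haagerup--Munkholm characterization of the volume maximizers; granting these, the conclusion is a formal consequence of Lemma~\ref{almost-regular} together with the definition of the weak-* topology, so I do not expect a substantial obstacle here. I would also note in passing that any degenerate simplices possibly occurring in the chains $c_i$ have volume $0$, contribute nothing to $\Theta(c_i)$, and lie in $U$, so they are harmless.
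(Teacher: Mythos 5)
Your argument is correct, and it follows exactly the route the paper itself signals: the paper states this lemma with only a citation to Kuessner, but prefaces it by noting that Lemma~\ref{almost-regular} ``may be exploited'' to prove it, which is precisely what you do (continuity of the volume on $\overline{S}_n^*(N)$ plus Haagerup--Munkholm to get a uniform volume gap $\vare$ on the compact support of a test function, then Lemma~\ref{almost-regular} to kill the mass of the sub-maximal-volume simplices). Your closing remarks on degenerate simplices and on the regularity of $\mu$ needed to pass from ``$\int f\,d\mu=0$ for all $f\in C_c(U)$'' to ``$|\mu|(U)=0$'' address the only delicate points, so nothing is missing.
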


Therefore, we will consider
$\mu$ both as an element of $\calM (\overline{S}^*(N))$ and as an element of $\calM(\reg(N))$.

We are now going to prove that the total variation of an efficient cycle is equal to the simplicial volume of $N$
(recall that the total variation is only lower semicontinuous with respect to weak-* convergence, hence the total variation
of an efficient cycle could be strictly smaller than $\|N\|$ a priori).

To this aim we need the definition of incenter and inradius of a geodesic hyperbolic simplex.
Consider a nondegenerate $n$-simplex $\Delta\in \overline{S}^*({\matH^n})$ (recall that a geodesic simplex is nondegenerate if its image is not contained in a hyperplane).
For every point $p\in \Delta\cap\matH^n$ we denote by $r_\Delta(p)$ the radius of the maximal hyperbolic ball centered in $p$ and contained
in $\Delta$. 
Since the volume of any $n$-simplex is smaller than $v_n$ and
the volume of $3$-balls diverges as the radius diverges, there exists a constant $r_n>0$ such that
$r_\Delta(p)\leqslant r_n$ for every $\Delta\in\overline{S}_n^*({\matH^n})$ and $p\in \Delta$. 

\begin{defn}
Take a nondegenerate simplex $\Delta\in \overline{S}^*_n(\mathbb{H}^n)$. 
The \emph{inradius} $r(\Delta)$ of $\Delta$ is 
$$
r(\Delta)=\sup_{p\in \Delta\cap \matH^n} r_\Delta(p)\ \in \ (0,r_n]
$$ 
(observe that $r(\Delta)>0$ since $\Delta$ is nondegenerate).
The \emph{incenter} $\inc(\Delta)$ is the unique point $p\in \Delta\cap \matH^n$ such that $r_\Delta(p)=r(\Delta)$.
\end{defn}

It is shown in~\cite[Lemma 3.12]{FFM} that the incenter is well-defined, and that the functions
$$
\inc\colon \overline{S}_n^*(\mathbb{H}^n)\to \matH^n, \qquad
r\colon \overline{S}_n^*(\mathbb{H}^n)\to\matR
$$
are continuous. 
%Moreover, there exists a universal upper bound $r_3>0$ such that $r(\Delta)\leq r_n$ for every geodesic simplex
%$\Delta\in \overline{S}^*(\mathbb{H}^n)$. 

If $\Delta$ is a simplex in $\overline{S}^*(N)$, we define its inradius $r(\Delta)$  as the inradius of any lift of $\Delta$ to $\matH^n$, and its
 incenter $\inc(\Delta)$  as the projection in $N$
of the incenter of any lift of $\Delta$ to $\mathbb{H}^n$ (the fact that this notions are well-defined is easily checked).

\begin{lemma}\label{inradius}
Let $\overline{\delta}$ be the inradius of the $n$-dimensional regular ideal simplex, and
let $\Delta_i\in \overline{S}^*_n(\matH^n)$, $i\in\mathbb{N}$ be a sequence such that $\lim_{i\to +\infty} \vol(\Delta_i)=v_n$.
Then $\lim_{i\to +\infty} r(\Delta_i)=\overline{\delta}$.
\end{lemma}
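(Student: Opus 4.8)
The plan is to combine soft compactness with the two continuity statements already available and the rigidity of the maximal–volume simplex. The key external input is the volume rigidity of the regular ideal simplex (due to Haagerup--Munkholm in general, and classical for $n=3$): a geodesic $n$-simplex attains the volume $v_n$ if and only if it is regular ideal, and all regular ideal simplices are isometric and hence share a single inradius, namely $\overline{\delta}$. Since the statement concerns a sequence of real numbers, I would first reduce to a subsequential claim: it suffices to show that every subsequence of $(\Delta_i)$ admits a further subsequence along which $r(\Delta_i)\to\overline{\delta}$, because a sequence in $\matR$ converges to $\overline{\delta}$ exactly when every subsequence has a sub-subsequence converging to $\overline{\delta}$. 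After fixing an arbitrary subsequence and relabelling, I may therefore assume we are working with the whole sequence.

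Next I would pass to the compactification. Viewing each $\Delta_i$ as the ordered $(n+1)$-tuple of its vertices, it is a point of $(\overline{\matH^n})^{n+1}$, which is compact since $\overline{\matH^n}$ is a closed ball. Extracting a subsequence, I may assume $\Delta_i\to\Delta_\infty$ for some $\Delta_\infty\in(\overline{\matH^n})^{n+1}$. The volume function $(\overline{\matH^n})^{n+1}\to[0,v_n]$ that sends an ordered tuple of vertices to the volume of the (possibly ideal, possibly degenerate) geodesic simplex they span extends continuously up to the boundary, so $\vol(\Delta_\infty)=\lim_{i\to+\infty}\vol(\Delta_i)=v_n$. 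By the rigidity statement, $\Delta_\infty$ is a regular ideal simplex; in particular it is nondegenerate, so $\Delta_\infty\in\overline{S}_n^*(\matH^n)$. Moreover, the nondegenerate locus is open in $(\overline{\matH^n})^{n+1}$ (degeneracy is the closed condition of lying in a hyperplane), so $\Delta_i\in\overline{S}_n^*(\matH^n)$ for all large $i$ and the convergence $\Delta_i\to\Delta_\infty$ takes place inside $\overline{S}_n^*(\matH^n)$.

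Finally I would invoke the continuity of the inradius function $r$ on $\overline{S}_n^*(\matH^n)$, stated above from~\cite[Lemma 3.12]{FFM}. Since $\Delta_i\to\Delta_\infty$ in $\overline{S}_n^*(\matH^n)$ and $\Delta_\infty$ lies in the (open) domain of continuity, we obtain $r(\Delta_i)\to r(\Delta_\infty)$, and $r(\Delta_\infty)=\overline{\delta}$ because $\Delta_\infty$ is a regular ideal simplex. This proves the subsequential claim, and hence the lemma. (Because $r$ and $\vol$ are $\isom(\matH^n)$-invariant, one could alternatively normalize the $\Delta_i$ by moving their incenters to a fixed basepoint, but since $(\overline{\matH^n})^{n+1}$ is already compact this is unnecessary.)

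The soft part of the argument—extracting a limit, openness of the nondegenerate locus, and applying the cited continuity of $r$—is routine. I expect the two points that require care are the volume rigidity of the regular ideal simplex and, relatedly, the genuine continuity of $\vol$ at boundary tuples with ideal or coincident vertices, which is what legitimizes passing to the limit $\Delta_\infty$ and reading off $\vol(\Delta_\infty)=v_n$.
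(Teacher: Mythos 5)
Your argument has a genuine gap at the step ``the volume function $(\overline{\matH^n})^{n+1}\to[0,v_n]$ extends continuously up to the boundary.'' This is false, and it is exactly the point where the soft compactness argument breaks down. Take a fixed regular ideal simplex $\Delta_0$ and a sequence of hyperbolic translations $g_i$ of diverging translation length with attracting fixed point $\xi\in\partial\matH^n$: then $\Delta_i=g_i(\Delta_0)$ satisfies $\vol(\Delta_i)=v_n$ for every $i$, yet $\Delta_i\to(\xi,\dots,\xi)$ in $(\overline{\matH^n})^{n+1}$, a degenerate tuple of volume $0$. So from $\Delta_i\to\Delta_\infty$ and $\vol(\Delta_i)\to v_n$ you cannot conclude $\vol(\Delta_\infty)=v_n$, nor that $\Delta_\infty$ is nondegenerate, and the continuity of $r$ (which is only available on $\overline{S}^*_n(\matH^n)$) cannot be invoked at $\Delta_\infty$. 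The failure is not confined to total collapse: e.g.\ in the upper half-space model the ideal tetrahedra with vertices $(0,\infty,\epsilon,\epsilon(1+i))$ all have the same positive volume but converge to the degenerate tuple $(0,\infty,0,0)$, so volume is not even upper semicontinuous where two vertices collide at an ideal point.

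Your parenthetical remark that normalizing by isometries is ``unnecessary'' is therefore exactly backwards: the normalization is the essential content. This is what the paper does, quoting \cite[Proposition 3.14]{FFM}, which asserts that if $\vol(\Delta_i)\to v_n$ then there exist $g_i\in\isom(\matH^n)$ with $g_i(\Delta_i)$ converging to a regular ideal simplex $\overline{\Delta}$; by $\isom(\matH^n)$-invariance and continuity of $r$ one then gets $r(\Delta_i)=r(g_i(\Delta_i))\to r(\overline{\Delta})=\overline{\delta}$. Note also that the naive fix of moving the incenters to a fixed basepoint does not immediately repair your argument: if $r(\Delta_i)\to 0$ along a subsequence, the normalized limit could still be degenerate, and ruling this out amounts to showing that small inradius forces volume bounded away from $v_n$ --- essentially the contrapositive of the lemma itself. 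The stability of near-maximal-volume simplices is a genuinely quantitative statement (the content of the cited proposition), not a formal consequence of compactness plus the rigidity of the volume maximizer.
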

\begin{proof}
By~\cite[Proposition 3.14]{FFM}, for every $i\in\mathbb{N}$ there exists an element $g_i\in\isom(\matH^n)$ such that
$\lim_{i\to +\infty} g_i(\Delta_i)=\overline{\Delta}$, where $\overline{\Delta}$ is a regular ideal simplex. Since the map $r\colon \overline{S}_n^*(\mathbb{H}^n)\to\matR$ is continuous, 
we thus get
$$
\lim_{i\to +\infty} r(\Delta_i)=\lim_{i\to +\infty} r(g(\Delta_i))=r(\overline{\Delta})=\overline{\delta}\ .
$$
\end{proof}

\begin{lemma}\label{compact}
Let $K\subseteq N$ be compact, and let $\delta_0>0$. Then the set
$$
\Lambda=\{\Delta \in \overline{S}^*(N)\, |\, \inc(\Delta)\in K,\, r(\Delta)\geq \delta_0\}
$$
is compact.
\end{lemma}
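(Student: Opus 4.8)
The plan is to lift the whole situation to $\matH^n$ and reduce the assertion to the compactness of a suitable subset of the compact space $(\overline{\matH^n})^{n+1}$. Since $K$ is compact and the covering projection $\pi\colon \matH^n\to N$ is a local homeomorphism, there is a compact set $\widetilde{K}\subseteq \matH^n$ with $\pi(\widetilde{K})=K$. Denoting by $q\colon \overline{S}_n^*(\matH^n)\to \overline{S}_n^*(N)=\Gamma\backslash \overline{S}_n^*(\matH^n)$ the quotient projection, I would set
$$
\widetilde{\Lambda}=\{\Delta\in \overline{S}_n^*(\matH^n)\, |\, \inc(\Delta)\in \widetilde{K},\ r(\Delta)\geq \delta_0\}\ .
$$
Recalling that the incenter and the inradius of a simplex in $\overline{S}_n^*(N)$ are defined through an arbitrary lift, one checks at once that $q(\widetilde{\Lambda})=\Lambda$: any $\Delta\in\Lambda$ has $\inc(\Delta)\in K=\pi(\widetilde{K})$, hence admits a lift whose incenter is a lift of $\inc(\Delta)$ lying in $\widetilde{K}$, and such a lift has inradius $\geq\delta_0$. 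As $q$ is continuous, it then suffices to prove that $\widetilde{\Lambda}$ is compact.

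Since $(\overline{\matH^n})^{n+1}$ is compact and Hausdorff, it is enough to show that $\widetilde{\Lambda}$ is closed in it. So I would take a sequence $\Delta_i\in\widetilde{\Lambda}$ converging to some $\Delta_\infty\in(\overline{\matH^n})^{n+1}$ and prove $\Delta_\infty\in\widetilde{\Lambda}$. Writing $c_i=\inc(\Delta_i)\in\widetilde{K}$ and passing to a subsequence, we may assume $c_i\to c_\infty\in\widetilde{K}$. By definition of inradius each $\Delta_i$ contains the closed hyperbolic ball $\overline{B}(c_i,\delta_0)$, and since $c_\infty\in\widetilde{K}\subseteq\matH^n$ these balls converge, in the Hausdorff metric, to the genuine ball $\overline{B}(c_\infty,\delta_0)$. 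The crux is to deduce that $\overline{B}(c_\infty,\delta_0)\subseteq\Delta_\infty$: granting this, $\Delta_\infty$ contains a ball of radius $\delta_0>0$, so it is nondegenerate, lies in $\overline{S}_n^*(\matH^n)$, and satisfies $r(\Delta_\infty)\geq\delta_0$; the continuity of $\inc$ on $\overline{S}_n^*(\matH^n)$ established in~\cite{FFM} then gives $\inc(\Delta_\infty)=\lim_i c_i=c_\infty\in\widetilde{K}$, whence $\Delta_\infty\in\widetilde{\Lambda}$.

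I expect the main obstacle to be precisely the inclusion $\overline{B}(c_\infty,\delta_0)\subseteq\Delta_\infty$, which amounts to an upper semicontinuity of the convex hull under convergence of vertices that may be ideal. I would settle it in the projective (Klein) model, where $\overline{\matH^n}$ is identified with the closed Euclidean ball $\overline{\mathbb{B}}^n$, geodesic simplices are Euclidean convex hulls of their vertices intersected with $\overline{\mathbb{B}}^n$, and convergence in $(\overline{\matH^n})^{n+1}$ becomes Euclidean convergence of the vertex tuples. The Euclidean convex hull depends continuously, for the Hausdorff metric, on its finitely many vertices, and for Hausdorff-convergent compact sets $C_i\to C$ any point belonging to $C_i$ for all large $i$ belongs to $C$. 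Concretely, for $x\in\matH^n$ with $d(x,c_\infty)<\delta_0$ one has $d(x,c_i)<\delta_0$ for $i$ large, hence $x\in\overline{B}(c_i,\delta_0)\subseteq\Delta_i$, and since $x$ is an interior point of the ball model the containment passes to the limit, giving $x\in\Delta_\infty$; taking closures yields $\overline{B}(c_\infty,\delta_0)\subseteq\Delta_\infty$. This shows $\widetilde{\Lambda}$ closed, hence compact, and finally $\Lambda=q(\widetilde{\Lambda})$ is compact as the continuous image of a compact set.
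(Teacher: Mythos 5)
Your proof is correct and follows essentially the same route as the paper's: lift $K$ to a compact $\widetilde K\subseteq\matH^n$, reduce to compactness of $\widetilde\Lambda$ inside the compact space $(\overline{\matH^n})^{n+1}$, and use the projective model together with the inscribed $\delta_0$-balls to rule out a degenerate limit. The only (minor, and arguably cleaner) variation is in the last step: the paper deduces nondegeneracy from a uniform lower bound on the Euclidean volumes of the $\widetilde\Delta_i$, whereas you show directly that the limit simplex contains $\overline{B}(c_\infty,\delta_0)$ via Hausdorff continuity of convex hulls, which also lets you establish nondegeneracy \emph{before} invoking the continuity of $\inc$ and $r$ (which is the logically correct order, since these maps are only defined on nondegenerate simplices).
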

\begin{proof}
Let $\widetilde{K}\subseteq \mathbb{H}^n$ be a compact subset such that $\pi(\widetilde{K})= K$ (for example, 
if $\pi\colon \mathbb{H}^n\to N$
is the universal covering, then
$\widetilde{K}$ may be chosen as 
the intersection between $\pi^{-1}(K)$ and a Dirichlet  domain for the action of $\Gamma$ on $\mathbb{H}^3$), and let
$$
\widetilde{\Lambda}=\{\widetilde{\Delta} \in \overline{S}^*(\matH^3)\, |\, \inc(\widetilde{\Delta})\in \widetilde{K},\, r(\widetilde{\Delta})\geq \delta_0\}\ .
$$
Under the projection $ \overline{S}^*(\matH^3)\to  \overline{S}^*(N)$, the set $\widetilde{\Lambda}$ is sent to $\Lambda$, hence in order to conclude it suffices to show that
$\widetilde{\Lambda}$ is compact or, equivalently, sequentially compact (being a subset of $(\overline{\mathbb{H}^3})^4$, the space
$\widetilde{\Lambda}$ is metrizable).

Let 
%$\Delta_i$, $i\in\mathbb{N}$, be a sequence of elements of $\Lambda$. 
%Let $\widetilde{K}\subseteq \mathbb{H}^n$ be a compact subset such that $\pi(\widetilde{K})\supseteq K$, where $\pi\colon \mathbb{H}^n\to N$
%is the universal covering, and for every $i\in\mathbb{N}$ let 
$\widetilde{\Delta}_i=(v_0^i,v_1^i,\dots,v_n^i)\in(\overline{\mathbb{H}}^n)^{n+1}$, $i\in\mathbb{N}$, be a 
sequence of elements in $\widetilde{\Lambda}$.
%lift of $\Delta_i$ such that $\inc(\widetilde{\Delta}_i)\in \widetilde{K}$.
Since $(\overline{\mathbb{H}}^n)^{n+1}$ is compact, up to passing to a subsequence we may assume that
$\widetilde{\Delta}_i$ tends to $\widetilde{\Delta}_\infty \in (\overline{\mathbb{H}}^n)^{n+1}$. 
Since the maps $r\colon \overline{S}_3^*(\mathbb{H}^3)\to\matR$ and $\inc\colon \overline{S}_3^*(\mathbb{H}^3)\to\matH^3$ are continuous and $\widetilde{K}$ is closed, we have 
$r(\widetilde{\Delta}_\infty)\geq \delta_0$ and $\inc(\widetilde{\Delta}_\infty)\in\widetilde{K}$, thus 
in order to conclude it is sufficient to show that
$\widetilde{\Delta}_\infty$ is nondegenerate. 

To this aim, we may exploit the projective model of hyperbolic space. In this model, each $\widetilde{\Delta}_i$ is a Euclidean simplex. Moreover, since
$\widetilde{K}$ is compact, the positive lower bound on the hyperbolic radii of the hyperbolic balls inscribed in the $\widetilde{\Delta}_i$ provides a positive lower bound on the Euclidean
radius of the Euclidean balls contained in $\widetilde{\Delta}_i$ (hyperbolic balls are represented by Euclidean ellipsoids in the projective model, which in turn contain Euclidean balls). This readily implies that the Euclidean volume of the $\widetilde{\Delta}_i$ is bounded below by a positive constant, hence the limit of the $\widetilde{\Delta}_i$ cannot be degenerate.
\end{proof}

We are now ready to prove Theorem~\ref{totalvariation} from the Introduction, which we recall here for the convenience of the reader:

\begin{varthm}[Theorem~\ref{totalvariation}]
Let $N$ be a complete finite-volume hyperbolic $n$-manifold, $n\geq 3$, and let $\mu$ be an efficient cycle for $N$.
Then $\|\mu\|=\|N\|$.
\end{varthm}
\begin{proof}
%Let 
%$$
%\mu =\lim_{i\to +\infty} \Theta (c_i)\ ,
%$$
%where $c_i$, $i\in\mathbb{N}$ is an efficient sequence. For every $i\in\mathbb{N}$, 
%we denote by $\Theta(c_i)^+$ (resp.~$\Theta(c_i)^-$) the positive part of $\Theta(c_i)$, so that $\Theta(c_i)^\pm$ is a non-negative measure and $\Theta(c_i)=\Theta(c_i)^+-\Theta(c_i)^-$. In the very same way we define
%$\mu^+$ and $\mu^-$. 

%It is proved in~\cite[Lemma 3.6]{Kue:efficient} that either $\lim_{i\to \infty} \|\Theta(c_i)^+\|\geq \|N\|/2$ or $\lim_{i\to \infty} \|\Theta(c_i)^-\|\geq \|N\|/2$ (or both). 
%Scopiazzare la dimostrazione del Lemma 3.6 di Kuessner. O ANCHE NO.

Let $r_n$ be a universal upper bound for the inradius of any nondegenerate $n$-dimensional geodesic simplex, as above.
For every $\vare>0$ we set
$$\thi_\varepsilon=\{\Delta \in \overline{S}^*_n(N)\, |\, \inc(\Delta)\in B(N_\varepsilon,r_n)\}\ ,$$
where $B(N_\varepsilon,r_n)$ denotes the closed $r_n$-neighbourhood of $N_\vare$ in $N$; in other words,
$\thi_\vare$ denotes the set of straight nondegenerate simplices of $N$ whose incenter lies in the
closed $r_n$-neighbourhood of the 
 $\vare$-thick part of $N$.
 
Let $\overline{\delta}$ be the inradius of the regular ideal $n$-simplex, and fix any constant $0<\delta_0<\overline{\delta}$. 
Also denote by $V_0$ the hyperbolic volume of a hyperbolic $n$-ball of radius $\delta_0$, and
set $\Omega_{\delta_0}=\{\Delta\in \overline{S}_n^*(N)\, |\, r(\Delta)\geq \delta_0\}$. 

Let now $c_i$, $i\in\mathbb{N}$ be a minimizing sequence, and
let us fix $\vare>0$. We choose $i_0\in\mathbb{N}$ such that $\vol(N\setminus N_{2^{-i_0}})\leq \vare v_n$.
Let $i\geq i_0$, and 
 consider the following partition
of the space of nondegenerate straight simplices in $N$:
$$
\Lambda_1=\overline{S}_n^*(N)\setminus\Omega_{\delta_0} \, ,\quad \Lambda_2=\Omega_{\delta_0}\cap \thi_{2^{-i_0}},\, ,\quad  \Lambda_3=\Omega_{\delta_0}\setminus \thi_{2^{-i_0}}\ .
$$ 
%where
 %$r_n$ is a universal upper bound for the inradius of any nondegenerate $n$-dimensional geodesic simplex, and $B(\thi_{2^{-i_0}},r_n)$ denotes the closed
 %$r_n$-neighbourhood of  $\thi_{2^{-i_0}}$ in $N$.

We denote by $c_i=c_i^1+c_i^2+c_i^3$ the corresponding decomposition of $c_i$, i.e.~we assume that the simplices appearing in $c_i^j$ belong to $\Lambda_j$ for $j=1,2,3$. 
By Lemma~\ref{inradius}, since $\delta_0$ is smaller than the inradius
of the regular ideal tetrahedron,
the volume of the lifts to $\matH^n$ of the simplices in $\Lambda_1$ is bounded above by a constant strictly smaller than $v_n$. By Lemma~\ref{almost-regular}, we then have
\begin{equation}\label{facilissima}
\lim_{i\to +\infty} \|c_i^1\|=0\ .
\end{equation}

Let now $\Delta\in \Lambda_3$, i.e. suppose that~$r(\Delta)\geq {\delta_0}$ and 
$\inc(\Delta)\notin B(N_{2^{-i_0}},r_n)$. 
Since $\delta_0\leq r_n$,
the ball $B(\inc(\Delta),\delta_0)\subseteq \Delta$ does not intersect
$N_{2^{-i_0}}$, hence $|\omega_{2^{-i_0}}(\Delta)|\leq v_n-V_0$. Thus
$$
|\omega_{2^{-i_0}} (c_i^3)|\leq \|c_i^3\|\cdot (v_n-V_0)\ .
$$

Observe now that $c_i$, being a relative fundamental cycle for $N_{2^{-i}}$, is \emph{a fortiori} a relative fundamental
cycle for $N_{2^{-i_0}}$. Hence
%We may now put together these estimates to get
\begin{align*}
\vol(N_{2^{-i_0}})&=|\omega_{2^{-i_0}}(c_i)|\leq |\omega_{2^{-i_0}}(c_i^1)|+ |\omega_{2^{-i_0}}(c_i^2)|+ |\omega_{2^{-i_0}}(c_i^3)|\\
&\leq \|c_i^1\|\cdot v_n+ \|c_i^2\|\cdot v_n+ \|c_i^3\|\cdot (v_n-V_0)= \|c_i\|\cdot v_n-\|c_i^3\|\cdot V_0\ .
\end{align*}
After dividing by $v_n$ we obtain
$$
\frac{\vol(N_{2^{-i_0}})}{v_n}\leq \|c_i\|-\|c_i^3\|\cdot  \frac{V_0}{v_n}\ ,
$$
%Since $\lim_{i\to +\infty} \vol(N_{2^{-i}})/v_n=\lim_{i\to +\infty} \|c_i\|=\|N\|$, this inequality implies that
%$$
%\lim_{i\to +\infty} \|c_i^3\|=0\ .
%$$
whence
$$
\|N\|- \vare=\frac{\vol(N)}{v_n}-\varepsilon \leq \frac{\vol(N_{2^{-i_0}})}{v_n}\leq \|c_i\|-\|c_i^3\|\cdot  \frac{V_0}{v_n}\leq \|N\|+2^{-i}-\|c_i^3\|\cdot  \frac{V_0}{v_n}
$$
and
$$
\|c_i^3\|\leq \frac{v_n}{V_0}(\varepsilon +2^{-i})\ .
$$
In particular, we have
\begin{equation}\label{primastima}
\limsup_{i\to +\infty} \|c_i^3\|\leq \frac{v_n}{V_0}\varepsilon \ .
\end{equation}

Since $\|c_i\|\geq \|N\|$ we also have
$$
\|c_i^2\|= \|c_i\|-\|c_i^1\|-\|c_i^3\|\geq \|N\|-\|c_i^1\|-\frac{v_n}{V_0}(\varepsilon +2^{-i})\ ,
$$
hence (recalling that $\|c_i^1\|\to 0$ as $i\to +\infty$) 
\begin{equation}\label{secondastima}
\liminf_{i\to +\infty} \|c_i^2\|\geq  \|N\| -\vare \frac{v_n}{V_0}\ .
\end{equation}

%Let us denote by $\Lambda_2^+$ (resp.~$\Lambda_2^-$) the subset of $\Lambda_2$ given by positively oriented (resp.~negatively oriented) simplices.
Observe  that, thanks to Lemma~\ref{compact}, the set $\Lambda_2$ is compact. 
Therefore, one may construct a 
compactly supported function $g\colon \overline{S}^*(N)\to [-1,1]$ such that
$g(\Delta)=1$ for every positively oriented $\Delta \in \Lambda_2$ and 
$g(\Delta)=-1$ for every negatively oriented $\Delta \in \Lambda_2$.
By Lemma~\ref{nonegative:lemma} we then have
$$
\liminf_{i\to +\infty} \int_{\Lambda_2} g \, d\Theta(c_i) =\liminf_{i\to +\infty} \|c_i^2\|\ . 
$$
%$$
%\lim_{i\to +\infty} \int_{\Lambda_2^-} g \, d\Theta(c_i) =\lim_{i\to +\infty} \|c_i^2\|\ ,
%$$
By definition of weak-* convergence, if $\mu=\lim_{i\to +\infty} \Theta(c_i)$, then from Equations~\eqref{facilissima}, \eqref{primastima} and \eqref{secondastima} (and the fact
that $\|g\|_\infty\leq 1$) we obtain
\begin{align*}
\left| \int_{\overline{S}^*(N)} g\, d\mu \right| &=\left|\lim_{i\to +\infty} \int_{\Lambda_1} g \, d\Theta(c_i)  +\lim_{i\to +\infty} \int_{\Lambda_2} g \, d\Theta(c_i) +
\lim_{i\to +\infty} \int_{\Lambda_3} g \, d\Theta(c_i)\right|\\
&\geq -\left| \lim_{i\to +\infty} \int_{\Lambda_1} g \, d\Theta(c_i)\right|  + \left| \lim_{i\to +\infty} \int_{\Lambda_2} g \, d\Theta(c_i) \right|-
\left|\lim_{i\to +\infty} \int_{\Lambda_3} g \, d\Theta(c_i)\right|\\
&\geq -\limsup_{i\to +\infty} \|c_i^1\|+\liminf_{i\to +\infty} \|c_i^2\|-\limsup_{i\to +\infty} \|c_i^3\|\\
& \geq \|N\|- \frac{2\vare v_n}{V_0}\ .
\end{align*}
Since $\|g\|_\infty\leq 1$, this inequality  implies that the total variation of $\mu$ is not smaller than $\|N\|- \frac{2\vare v_n}{V_0}$.
Due to the arbitrariness of $\vare$, we may conclude that $\|\mu\|\geq \|N\|$.
%$$\left| \int_{\overline{S}^*(N)} g\, d\mu \right| \geq \|N\|\ .$$
%Since $\|g\|_\infty\leq 1$, this implies in turn that the total variation of $\mu$ is not smaller than $\|N\|$. 
On the other
hand, it is well known that the total variation is lower semicontinuous with respect to the weak-* convergence,
hence $\|\mu\|\leq \lim_{i\to +\infty} \|\Theta(c_i)\|=\|N\|$. This concludes the proof.
\end{proof}

Our normalization of the Haar measure implies that $\|\mu_\eq\|=2\vol(N)$. Therefore, Theorem~\ref{totalvariation} readily implies the following:

\begin{cor}\label{cor}
Let $k\in\mathbb{R}$ and suppose that the measure $\mu=k\mu_\eq$ is an efficient cycle for $N$. Then $k=1/(2v_n)$.
\end{cor}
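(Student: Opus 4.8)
The plan is to read off the value of $k$ directly from Theorem~\ref{totalvariation}, so that essentially no new work is required beyond a one-line computation of total variations and a short sign check.

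First I would compute $\|\mu\|$ in two ways. On the one hand, since $\mu=k\mu_\eq$ is by hypothesis an efficient cycle, Theorem~\ref{totalvariation} gives $\|\mu\|=\|N\|$, and the Gromov--Thurston proportionality $\|N\|=\vol(N)/v_n$ (already used in the proof of Lemma~\ref{nonegative:lemma}) rewrites this as $\|\mu\|=\vol(N)/v_n$. On the other hand, the total variation is absolutely homogeneous, so $\|\mu\|=\|k\mu_\eq\|=|k|\cdot\|\mu_\eq\|=2|k|\vol(N)$, where in the last equality I use the normalization $\|\mu_\eq\|=2\vol(N)$ recorded just above the statement. Equating the two expressions and cancelling the factor $\vol(N)>0$ (here one uses that $N$ has positive finite volume) yields $|k|=1/(2v_n)$.

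It remains only to rule out the negative sign. For this I would invoke Lemma~\ref{nonegative:lemma}: writing $\mu=\lim_i\Theta(c_i)$ for a minimizing sequence $c_i$, that lemma says that the part of $c_i$ carried by positively oriented simplices with negative coefficients (and by negatively oriented simplices with positive coefficients) has $\ell^1$-norm tending to $0$. Testing $\mu$ against a nonnegative compactly supported function supported away from $\reg^-(N)$ and passing to the weak-$*$ limit, the contribution of these ``wrong-sign'' terms disappears, so $\mu$ is nonnegative on $\reg^+(N)$. Since $\mu$ restricted to $\reg^+(N)$ equals $k\,\haar_+$ with $\haar_+$ a nonzero positive measure, this forces $k\geq 0$. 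Combined with $|k|=1/(2v_n)$ we conclude $k=1/(2v_n)$.

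I do not anticipate any genuine difficulty: all the substance is contained in Theorem~\ref{totalvariation}, and the remaining arithmetic is trivial. The only point that deserves a little care is the sign argument, that is, converting the asymptotic orientation-coherence of Lemma~\ref{nonegative:lemma} into positivity of the limiting measure on $\reg^+(N)$; this is the step I would write out in full, making sure to use only compactly supported test functions so as to stay within the regime where weak-$*$ convergence applies.
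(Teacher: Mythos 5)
Your proof is correct and follows essentially the same route as the paper, which deduces $|k|=1/(2v_n)$ immediately from Theorem~\ref{totalvariation} together with the normalization $\|\mu_\eq\|=2\vol(N)$. Your additional sign check via Lemma~\ref{nonegative:lemma} is a valid elaboration of a point the paper leaves implicit.
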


\section{Manifolds admitting a unique efficient cycle}\label{unique:sec}
Jungreis and Kuessner proved that, if $N$ is a non-Gieseking like hyperbolic manifold, then every efficient cycle of $N$ is equidistributed. 
In this section we strengthen this result by showing that the same conclusion holds under the {supposedly} less restrictive requirement  that $N$ be non-commensurable
with the Gieseking manifold. We may concentrate our attention on the three-dimensional case, the higher-dimensional case being covered by the results
proved in~\cite{Kue:efficient}.
Therefore, throughout this section we denote by  $N$ a complete finite-volume hyperbolic $3$-manifold. 

Henceforth we fix a regular ideal simplex (with ordered vertices) $\Delta_0\in\reg(\matH^3)$, which we exploit to fix an identification 
$\reg(\matH^3)\cong \isom(\matH^3)$, as explained at the end of Section~\ref{preliminaries}.
For $i=0,\dots,3$,
let $r_i\in \isom^-(\matH^3)$ be the hyperbolic reflection with respect to the plane containing the $i$-th face of $\Delta_0$. Under the identification
$\reg( \matH^3)\cong \isom(\matH^3)$, the \emph{right} multiplication 
$$
\isom(\matH^3)\to \isom(\matH^3)\, ,\qquad g\mapsto g\cdot r_i
$$
corresponds to the map $\rho_i\colon \reg( \matH^3)\to \reg( \matH^3)$ sending any simplex $\Delta\in\reg(\mathbb{H}^3)$ to the simplex obtained by reflecting $\Delta$ with respect to plane containing
its $i$-th face. 
We denote by $R$ the subgroup of $\isom(\matH^3)$ generated by the $r_i$, $i=0,\dots,3$, and we set $R^\pm=R\cap \isom^\pm(\matH^3)$.
Observe that, since the (left) action of $\Gamma$ and the (right) action of $R$ on $\isom(\matH^3)$ commute, the groups $R,R^+$ also act on
$\reg(N)$.

Recall that any efficient cycle for $N$ is supported on $\reg(N)$, so that  we can consider
 efficient cycles as elements of $\calM(\reg(N))$. 
The following result is proved in~\cite[Lemmas 3.9 and 3.10]{Kue:efficient} (see also~\cite[Lemma 2.2]{Jungreis}):

\begin{lemma}\label{reflection:lemma}
Let $\mu\in \calM(\reg(N))$ be any efficient cycle for $N$. Then $\mu$ is invariant with respect to the right action of $R^+$
on 
$\reg(N)$. Moreover, for every $r\in R^-$ we have $r_*(\mu)=-\mu$. 
\end{lemma}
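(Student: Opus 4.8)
The plan is to prove the relation on the generating reflections and then propagate it multiplicatively. Since $R$ is generated by $r_0,\dots,r_3$ and the orientation homomorphism $R\to\{\pm1\}$ sends each $r_i$ to $-1$, every $r\in R^+$ is a word of even length in the $r_i$ and every $r\in R^-$ a word of odd length. Writing $\rho_i\colon\reg(N)\to\reg(N)$ for the map induced by right multiplication by $r_i$, it therefore suffices to prove that $\rho_{i*}\mu=-\mu$ for each $i=0,\dots,3$: for a word $r=r_{i_1}\cdots r_{i_k}$ one has $(\cdot\, r)_*=(\cdot\, r_{i_k})_*\circ\cdots\circ(\cdot\, r_{i_1})_*$, so $r_*\mu=(-1)^k\mu$, which is $\mu$ when $r\in R^+$ and $-\mu$ when $r\in R^-$. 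As a consistency check, the equidistributed cycle already satisfies this: right multiplication by $r_i$ preserves $\haar$ and swaps $\isom^\pm(\matH^3)$, hence exchanges $\haar_+$ and $\haar_-$ and sends $\mu_{\eq}=\haar_+-\haar_-$ to $-\mu_{\eq}$.

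The first substantial step is to record that an efficient cycle is a \emph{cycle}: $\partial\mu=0$, where for a compactly supported continuous function $\psi$ on the space of nondegenerate straight triangles of $N$ we set $\langle\partial\mu,\psi\rangle=\int\delta\psi\,d\mu$ with $\delta\psi(\Delta)=\sum_{i=0}^{3}(-1)^i\psi(\partial_i\Delta)$. If $\mu=\lim_i\Theta(c_i)$ with $c_i$ a minimizing sequence, then $\langle\partial\mu,\psi\rangle=\lim_i\langle\Theta(\partial c_i),\psi\rangle$; since each $c_i$ is a relative cycle in $C_3(N_{2^{-i}},N\setminus\inte{N_{2^{-i}}})$, its boundary is supported in the shrinking thin part $N\setminus\inte{N_{2^{-i}}}$, which escapes into the cusps. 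Testing only against $\psi$ supported on triangles whose incenter lies in a fixed compact subset of $N$ (the incenter of a face of a nearly regular simplex stays at bounded distance from the incenter of the simplex), these contributions vanish in the limit; letting the compact set exhaust $N$ and invoking Lemma~\ref{support} together with the no-loss-of-mass statement of Theorem~\ref{totalvariation} to rule out mass escaping to the cusps, one obtains $\partial\mu=0$. A compactness input of the type of Lemma~\ref{compact} is what makes this limiting argument work.

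The heart of the proof is then a local computation around a fixed regular ideal triangle $\phi$, which combines two rigidity features. First, exactly two regular ideal tetrahedra have $\phi$ as a face, and they are interchanged by the reflection across the plane of $\phi$; concretely, after fixing an ordering $\phi=(w_0,w_1,w_2)$, the ordered tetrahedra admitting $\phi$ as a face are the eight simplices obtained by inserting one of the two admissible apices $a^\pm$ into one of the four positions, the apex position being the index of the face equal to $\phi$, and for apex in position $i$ replacing $a^+$ by $a^-$ is precisely the reflection $\rho_i$ across the $i$-th face. Second, since $\mu$ is alternating, moving the apex from position $0$ to position $i$ multiplies the local value of $\mu$ by $\varepsilon$ of the corresponding permutation, namely by $(-1)^i$. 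Feeding this into the cycle condition, the boundary signs $(-1)^i$ cancel against the alternating signs $(-1)^i$, so the four positions (for a fixed apex) contribute equally and the identity $\langle\partial\mu,\psi\rangle=0$ localizes, near $\phi$, to the single relation ``value on the $a^+$ side $+$ value on the $a^-$ side $=0$''. As the two sides are exchanged by $\rho_i$, this is exactly $\rho_{i*}\mu=-\mu$, completing the reduction of the first paragraph. I expect the main obstacle to be the rigorous measure-theoretic bookkeeping of this last step: producing, from a given test function on tetrahedra, a test function $\psi$ on triangles whose coboundary isolates the pair $(\mathrm{id},\rho_i)$ modulo terms that the alternating symmetry forces to integrate to zero, while simultaneously keeping the estimates uniform near the cusps. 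The orientation and sign conventions, though delicate, are routine once this framework is in place.
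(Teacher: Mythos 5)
The paper does not prove this lemma: it is quoted verbatim from Kuessner (Lemmas 3.9 and 3.10 of \cite{Kue:efficient}) and Jungreis (Lemma 2.2 of \cite{Jungreis}), so there is no in-paper argument to compare against. Your proposal is, in outline, a correct reconstruction of the argument of those references. The reduction to the generators $r_i$ via the orientation homomorphism is fine, the sign bookkeeping is right (the cyclic permutation placing the apex in position $i$ has sign $(-1)^i$, so alternation gives $(\partial_i)_*\mu=(-1)^i(\partial_0)_*\mu$ and the cycle condition collapses to $4(\partial_0)_*\mu=0$), and the passage from $(\partial_0)_*\mu=0$ to $\rho_{0*}\mu=-\mu$ is exactly the statement that $\partial_0$ restricted to $\reg(N)$ is a two-to-one map whose deck transformation is the reflection in the $0$-th face, so that a measure with vanishing pushforward must be anti-invariant.

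Two points deserve more care than your sketch gives them, both in the step $\partial_*\mu=0$. First, the identity $\langle\Theta(\partial c_i),\psi\rangle=\langle\Theta(c_i),\delta\psi\rangle$ involves the test function $\delta\psi$ on tetrahedra, which is bounded and continuous but \emph{not} compactly supported (the vertex opposite a fixed face is unconstrained), so weak-$*$ convergence alone does not let you pass to the limit; you need tightness of the sequence $\Theta(c_i)$. Your instinct to invoke Theorem~\ref{totalvariation} is correct and sufficient: $\|\Theta(c_i)\|\to\|\mu\|$ together with weak-$*$ convergence forces the mass of $\Theta(c_i)$ outside large compacta to be uniformly small (equivalently, one can quote the estimates on $\|c_i^1\|$ and $\|c_i^3\|$ from the proof of that theorem directly). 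Second, the vanishing of $\langle\Theta(\partial c_i),\psi\rangle$ for compactly supported $\psi$ uses that a straight triangle contained in a single component of $N\setminus\inte{N_{2^{-i}}}$ lifts into a single horoball, hence leaves every compact set of \emph{nondegenerate} triangles as $i\to\infty$ (its limits are degenerate, all vertices collapsing to the parabolic point); this is where a Lemma~\ref{compact}-type statement for triangles is genuinely needed, as you anticipated. With those two points made precise the proof is complete.
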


If $\Delta=(v_0,v_1,v_2,v_3)\in\reg(\matH^3)$ is an arbitrary regular ideal simplex, we denote by $T_\Delta\subseteq \reg(\matH^3)$  the set defined as follows:
$\Delta'=(v'_0,v'_1,v'_2,v'_3)\in \reg(\matH^3)$ belongs to $T_\Delta$ if and only if its vertices $v'_0,v'_1,v'_2,v'_3$ span a geodesic simplex of the unique tiling of $\matH^3$
by regular ideal tetrahedra
containing the geodesic simplex spanned by $v_0,v_1,v_2,v_3$. We also denote by $\aut(T_\Delta)<\isom(\matH^3)$ the subgroup of $\isom(\matH^3)$ leaving
$T_\Delta$ invariant. It is easy to check that $\aut(T_\Delta)$ is discrete. 

Recall that two subgroups $\Gamma_1,\Gamma_2$ of $\isom(\matH^3)$ are \emph{commensurable} if there exists 
$g\in\isom(\matH^3)$ such that $(g\Gamma_1g^{-1})\cap \Gamma_2$ has finite index both in $g\Gamma_1g^{-1}$ and in $\Gamma_2$. If $\Gamma_1$ and $\Gamma_2$ are discrete and torsion-free,
this is equivalent to requiring that the hyperbolic manifolds $\Gamma_1\backslash \mathbb{H}^3$ and $\Gamma_2\backslash \mathbb{H}^3$
admit a common finite-sheeted Riemannian covering. 

\begin{lemma}\label{commens1}
%\begin{enumerate}
%\item
For every $\Delta\in\reg(\matH^3)$, the group $\aut(T_\Delta)$ is commensurable with $R$. Moreover, both these groups are commensurable with the
fundamental group of the Gieseking manifold.
%\item Let $g\in\isom(\matH^3)$, and suppose that there exist $\Delta',\Delta''\in T_\Delta$ such that $g\cdot \Delta'=\Delta''$. Then $g\in \aut(T_\Delta)$.
%\end{enumerate}
\end{lemma}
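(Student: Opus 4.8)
The plan is to reduce everything to the fixed simplex $\Delta_0$, to recognize $R$ as a finite-index subgroup of $\aut(T_{\Delta_0})$, and finally to compare with the Gieseking group by a covolume/index argument. Throughout I use that the relation of commensurability introduced above is an equivalence relation which is invariant under conjugation: indeed, two conjugate subgroups $\Gamma$ and $h\Gamma h^{-1}$ are commensurable (take $g=h$), and any two subgroups one of which has finite index in the other are commensurable (take $g=\id$). So it suffices to prove all statements up to replacing a group by a conjugate or by a finite-index subgroup.

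First I would reduce to the case $\Delta=\Delta_0$. Since $\isom(\matH^3)$ acts transitively on $\reg(\matH^3)$, for an arbitrary $\Delta$ I pick $g\in\isom(\matH^3)$ with $g(\Delta_0)=\Delta$. As $g$ carries the unique regular ideal tiling through $\Delta_0$ to the one through $\Delta$, we have $g(T_{\Delta_0})=T_\Delta$, whence $\aut(T_\Delta)=g\,\aut(T_{\Delta_0})\,g^{-1}$. Thus $\aut(T_\Delta)$ is conjugate to $\aut(T_{\Delta_0})$, hence commensurable with it, and it suffices to prove both assertions for $\Delta_0$.

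Next I would show that $R$ has finite index in $\aut(T_{\Delta_0})$. The regular ideal tetrahedron $\Delta_0$ has all dihedral angles equal to $\pi/3$, so it is a Coxeter polyhedron; by Poincar\'e's polyhedron theorem the group $R$ generated by the reflections $r_0,\dots,r_3$ is discrete, has $\Delta_0$ as a fundamental domain, and tiles $\matH^3$ by its $R$-translates of $\Delta_0$. By uniqueness of the regular ideal tiling this family of translates is precisely $T_{\Delta_0}$, so $R\leq \aut(T_{\Delta_0})$ and $R$ acts simply transitively on the tiles. Moreover any element of $\aut(T_{\Delta_0})$ stabilizing $\Delta_0$ restricts to a symmetry of the regular ideal tetrahedron, and conversely any such symmetry extends (again by uniqueness of the tiling) to an element of $\aut(T_{\Delta_0})$; hence the stabilizer of $\Delta_0$ has order $24$. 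Simple transitivity of $R$ then gives $\aut(T_{\Delta_0})=R\cdot\mathrm{Stab}(\Delta_0)$ with $R\cap\mathrm{Stab}(\Delta_0)=\{\id\}$, so $[\aut(T_{\Delta_0}):R]=24$ and the two groups are commensurable.

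Finally I would bring in the Gieseking group, which I denote by $\Gamma_G$. The Gieseking manifold decomposes into a single regular ideal tetrahedron, so its developing map realizes the regular ideal tiling, and after conjugating so that this tiling equals $T_{\Delta_0}$ its holonomy $\Gamma_G$ lies in $\aut(T_{\Delta_0})$. Both $R$ and $\Gamma_G$ are lattices of covolume $v_3=\vol(\Delta_0)$ (one tetrahedron), and $\aut(T_{\Delta_0})$ is discrete; since a lattice contained in a discrete group has finite index in it (the index equals the finite, positive ratio of covolumes), both $R$ and $\Gamma_G$ are finite-index subgroups of $\aut(T_{\Delta_0})$. Therefore $R$, $\aut(T_{\Delta_0})$ and $\Gamma_G$ are pairwise commensurable, which with the first step proves the lemma. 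The only delicate point is the Poincar\'e-theorem analysis of $R$ together with the uniqueness of the regular ideal tetrahedral tiling, invoked repeatedly to identify the $R$-, $\Gamma_G$- and symmetry-orbits with $T_{\Delta_0}$; once this is in place the commensurability claims follow formally from the covolume comparisons.
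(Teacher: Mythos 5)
Your proof is correct and follows essentially the same route as the paper: conjugate to reduce to $\Delta_0$, then exhibit both $R$ and the Gieseking group as finite-index subgroups of the discrete group $\aut(T_{\Delta_0})$ via covolume considerations. The only difference is cosmetic: where the paper simply invokes ``discrete ambient group plus finite covolume implies finite index'' for $R$, you additionally verify $R\leq\aut(T_{\Delta_0})$ via Poincar\'e's polyhedron theorem and compute the index $24$ explicitly, which the paper only records for the Gieseking group.
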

\begin{proof}
 If $g\in\isom(\matH^3)$ is such that $g(\Delta_0)=\Delta$, then $g\cdot   \aut(T_{\Delta_0}) \cdot g^{-1}=\aut(T_{\Delta})$. Moreover,
$R<\aut(T_{\Delta_0})$ and the index of $R$ is finite, since $\aut(T_{\Delta_0})$ is discrete and $R$ has finite covolume.
This implies that $\aut(T_\Delta)$ is commensurable with $R$. 

Up to conjugacy, we may suppose that $\Delta_0$ is a fundamental domain for the action of the fundamental group $G$ of the Gieseking manifold on $\matH^3$. 
Then $G<\aut(T_{\Delta_0})$ and, as above, the index of $G$ in $\aut(T_{\Delta_0})$ is finite because $G$ has finite covolume (in fact, this index
is equal to $4!=24$). This concludes the proof.

%(2): We have of course $g(T_{\Delta})=T_{g\cdot \Delta}$. Since $g\cdot \Delta'=\Delta''$, this means that the tilings by ideal regular simplices containing $\Delta$
%and $g\cdot \Delta$ share an ideal simplex. However, this implies that such tilings must in fact coincide, i.e.~$g(T_{\Delta})=T_{\Delta}$ and $g\in\aut(T_\Delta)$.
\end{proof}

%The following result is the key step of our proof of the fact that, if $N$ admits a non-equidistributed efficient cycle, then 
%$N$ is commensurable with the Gieseking manifold. 

\begin{thm}\label{ergodic:prop}
Suppose there exists a non-equidistributed efficient cycle  $\mu\in \calM(\reg(N))$. Then $N$ is commensurable with the Gieseking manifold. 
\end{thm}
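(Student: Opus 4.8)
The plan is to work throughout with the identification $\reg(N)=\Gamma\backslash\isom(\matH^3)$ and the right action of $R$ introduced above, analysing the efficient cycle $\mu$ through its behaviour under this action. By Lemma~\ref{reflection:lemma} the measure $\mu$ is invariant under the right action of $R^+$ and satisfies $r_*\mu=-\mu$ for every $r\in R^-$; consequently $r_*|\mu|=|r_*\mu|=|\mu|$ for all $r\in R$, so the total variation $|\mu|$ is invariant under all of $R$. Set $G=\isom^+(\matH^3)$, a simple Lie group in which both $\Gamma$ and $R^+$ are lattices (recall that $R$ has finite covolume). The strategy is to extract commensurability either from an atom of $\mu$ or from a discreteness dichotomy for $\langle\Gamma,R^+\rangle$, and otherwise to force $\mu$ to be equidistributed.

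First I would dispose of the case in which $\mu$ has an atom. If $\mu(\{x\})\neq 0$ for some $x=\Gamma g\in\reg(N)$, then right $R^+$-invariance forces $\mu$ to carry an atom of the same mass at every point of the orbit $xR^+$; since $\mu$ is a finite measure this orbit is finite, that is, the stabiliser $R^+\cap g^{-1}\Gamma g$ has finite index in $R^+$. As $R^+$ and $g^{-1}\Gamma g$ are both lattices in $G$, a finite-index subgroup of one has finite covolume in the other and is therefore of finite index there as well; hence $R^+$ and $g^{-1}\Gamma g$ are commensurable, so $\Gamma$ is commensurable with $R^+$, and thus with $R$. By Lemma~\ref{commens1} this means $\Gamma$ is commensurable with the fundamental group of the Gieseking manifold, whence $N$ is commensurable with the Gieseking manifold.

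It remains to treat a non-atomic $\mu$, where I would run the following dichotomy for $\langle\Gamma,R^+\rangle<G$. If this subgroup is discrete, then it contains the lattice $\Gamma$, so it is itself a lattice and both $\Gamma$ and $R^+$ are finite-index subgroups of it; exactly as in the atomic case this yields commensurability with the Gieseking manifold. If instead $\langle\Gamma,R^+\rangle$ is not discrete, its closure is a positive-dimensional closed subgroup of the simple group $G$ containing the Zariski-dense lattice $\Gamma$, hence equals $G$, so $\langle\Gamma,R^+\rangle$ is \emph{dense}. In this dense case I would prove that $\mu$ is necessarily equidistributed, contradicting the hypothesis. Write $\mu=\mu_{ac}+\mu_{s}$ for the decomposition into absolutely continuous and singular parts with respect to the Haar class; this decomposition is canonical, so both parts inherit the symmetries of Lemma~\ref{reflection:lemma}. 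Moore's ergodicity theorem guarantees that the right $R^+$-action on $(\reg^+(N),\haar_+)$, and on $(\reg^-(N),\haar_-)$, is ergodic (as $R^+$ is non-precompact), so the $R^+$-invariant density of $\mu_{ac}$ is constant on each of $\reg^+(N)$ and $\reg^-(N)$; combined with $r_*\mu=-\mu$ for $r\in R^-$ this forces $\mu_{ac}=c\,\mu_\eq$ for some $c\in\matR$.

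The main obstacle is to show that in the dense case the singular part $\mu_{s}$ vanishes: granting this, $\mu=c\,\mu_\eq$ is equidistributed (with $c=1/(2v_n)$ by Corollary~\ref{cor} and Theorem~\ref{totalvariation}), contradicting our standing assumption and completing the proof. To exclude a nonzero $\mu_{s}$ I would exploit that $R$ is non-uniform: being commensurable with the cusped Gieseking manifold, $R^+$ contains nontrivial unipotent (parabolic) elements arising from the stabiliser of an ideal vertex of the regular ideal tiling, and the finite $R$-invariant measure $|\mu_{s}|$ is invariant under them. Applying Ratner-type measure rigidity to this unipotent dynamics, each ergodic component of $|\mu_{s}|$ should be homogeneous, supported on a closed orbit of an intermediate subgroup $H\le G$ containing a unipotent; the density of $\langle\Gamma,R^+\rangle$ then rules out every proper such $H$, leaving only $H=G$, i.e.\ the Haar measure—contradicting the singularity of $\mu_{s}$. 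Making this last step rigorous—most delicately, promoting the invariance under the discrete unipotents sitting inside $R^+$ to the continuous unipotent invariance demanded by Ratner's theorem, within the rank-one setting of $G=\isom^+(\matH^3)$—is the technical heart of the argument and the point I expect to require the most care.
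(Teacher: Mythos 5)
Your handling of the atomic case is correct and complete: a nonzero atom of $\mu$ at $\Gamma g$, together with the finiteness of $\|\mu\|$ and the $R$-invariance of $|\mu|$, forces the stabiliser $R^+\cap g^{-1}\Gamma g$ to have finite index in $R^+$, and the covolume comparison between lattices then gives commensurability of $\Gamma$ with $R$, hence with the Gieseking group by Lemma~\ref{commens1}. This is in fact a slightly cleaner packaging of the endgame of the paper's proof, which reaches the same conclusion by showing that the tiling $T_{\Delta_1}$ is covered by finitely many $\Gamma$-orbits and deducing $[\aut(T_{\Delta_1}):\Gamma\cap\aut(T_{\Delta_1})]<\infty$ by hand.

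The genuine gap is the reduction to the atomic case, which is precisely the step you leave open. The paper gets this reduction for free from Kuessner's structure theorem (\cite{Kue:efficient}, Section 4): every efficient cycle decomposes as $\mu=k\mu_\eq+\mu'$, where $\mu'$ is supported on regular ideal simplices all of whose lifts have their vertices at parabolic fixed points of $\Gamma$. Since there are only countably many such simplices, $\mu'$ is automatically purely atomic, and the atomic argument applies to any atom of $\mu'$; no dynamical input is needed. Your substitute --- Moore ergodicity for $\mu_{ac}$ plus a Ratner-type classification to kill a non-atomic singular part $\mu_{s}$ when $\langle\Gamma,R^+\rangle$ is dense --- is exactly where the proof is missing. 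Invariance of $|\mu_{s}|$ under the discrete group $R^+$ yields invariance only under single unipotent elements, not under a one-parameter unipotent subgroup; even granting the version of Ratner's measure rigidity valid for cyclic unipotent groups, the ergodic components one must then exclude include finite invariant measures on closed horospherical orbits and measures on closed orbits of intermediate subgroups such as $\mathrm{PSL}(2,\matR)$ (totally geodesic surfaces), all of which are singular, non-atomic and invariant under a unipotent element; ruling these out using the full $R^+$-invariance and the density of $\langle\Gamma,R^+\rangle$ is nontrivial and is not carried out. As written, the argument therefore does not establish the theorem; you should either complete that analysis or, much more economically, invoke Kuessner's decomposition, which makes the whole non-atomic branch of your dichotomy unnecessary.
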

\begin{proof}
Let $N=\Gamma\backslash \matH^3$.
As proved in~\cite[Section 4]{Kue:efficient}, the efficient cycle $\mu$ decomposes into the sum of a multiple
of $\mu_\eq$ and a measure $\mu'\in\reg(N)$ which is supported on tetrahedra whose lifts in $\matH^3$
have all their vertices in parabolic fixed points of $\Gamma$. 
Since $\mu$ is non-equidistributed, we may assume
that $\mu'\neq 0$.

Since parabolic fixed points of $\Gamma$ are in countable number, the support of $\mu'$ is also countable, and this implies in turn that $\mu'$ is purely atomic.
Moreover, since $\mu'=\mu-k\mu_\eq$ for some $k\in\mathbb{R}$, the measure $\mu'$ also satisfies $r_*(\mu')=\mu'$ for every $r\in R^+$
and $r_*(\mu')=-\mu'$ for every $r\in R^-$. Let us set 
$$
\Omega=\{[\Delta]\in \reg(N)\, |\, \mu'(\{ [\Delta]\})\neq 0\}\neq \emptyset\ .
$$
Due to the $R$-equivariance of $\mu'$, the countable set $\Omega$ is $R$-invariant. Let us fix a  non-empty $R$-orbit $\overline{\Omega}\subseteq \Omega$.
The absolute value of the measure $\mu'$ is constant on elements of $\overline{\Omega}$. Since $\mu'$ has finite total variation, this implies that 
$$\overline{\Omega}=\{[\Delta_1],\dots,[\Delta_k]\}$$ 
is finite. For every $i=1,\dots, k$, let $\Delta_i$ be a lift of $[\Delta_n]\in \reg(N)$ in $\reg(\matH^3)$. 
By looking at the definition of the actions of $R$ and of $\Gamma$ on $\reg(\matH^3)$, we deduce that
the $R$-orbit of $\Delta_1$ in $\reg(\matH^3)$ is contained in
$$
\Gamma\cdot \Delta_1\cup\dots\cup \Gamma\cdot \Delta_k\ .
$$
Observe now that the $R$-orbit of $\Delta_1$ in $\reg(\matH^3)$  realizes a tiling of $\matH^3$ by regular ideal tetrahedra. Therefore, up to adding to the $\Delta_j$, $j=1,\dots,k$ all the simplices obtained by permuting their vertices (which are still in finite number), we may
assume that 
\begin{equation}\label{union}
T_{\Delta_1} \subseteq 
%$\aut(T_{\Delta_1})$-orbit of $\Delta_1$ in $\reg(\matH^n)$ 
\Gamma\cdot \Delta_1\cup\dots\cup \Gamma\cdot \Delta_k \ .
\end{equation}

We are now going to show that the group $\Gamma\cap \aut(T_{\Delta_1})$ has finite index in $\aut(T_{\Delta_1})$. 
To this aim we will just exploit~\eqref{union}; hence,
for every  $j=1,\dots,k$ we may assume that $T_{\Delta_1}\cap (\Gamma\cdot \Delta_j)\neq \emptyset$. Thus, up to replacing $\Delta_j$ 
with another simplex in its $\Gamma$-orbit, we suppose that $\Delta_j\in T_{\Delta_1}$. Observe now that $T_{\Delta_1}$ is the orbit of $\Delta_1$
under the action of $\aut(T_{\Delta_1})$, hence, thanks to~\eqref{union}, for every $j=1,\dots,k$ there exists $g_j\in \aut(T_{\Delta_1})$
such that $g_j\cdot \Delta_j=\Delta_1$. 

Let us fix $g\in \aut(T_{\Delta_1})$. Since $g\cdot \Delta_1 \in T_{\Delta_1}\subseteq \Gamma\cdot \Delta_1\cup\dots\cup \Gamma\cdot \Delta_k$, there exist
$\gamma\in \Gamma$, $j\in\{1,\dots,k\}$ such that $g\cdot \Delta_1=\gamma\cdot \Delta_j$, whence $(\gamma^{-1}g)\cdot \Delta_1=\Delta_j$
and $(g_j\gamma^{-1}g)\cdot \Delta_1=g_j\cdot \Delta_j=\Delta_1$. However, since the unique hyperbolic isometry which fixes
the vertices of a regular ideal tetrahedron is the identity, the stabilizer of $\Delta_1$ in $\aut(T_{\Delta_1})$ is trivial, hence
$g_j\gamma^{-1}g=1$, i.e.~$g=\gamma g_j^{-1}$ (and, in particular, $\gamma\in \Gamma\cap \aut(T_{\Delta_1})$). We have thus shown that the set $\{g_1,\dots,g_k\}$
contains a set of representatives for the set of right lateral classes of $ \Gamma\cap \aut(T_{\Delta_1})$ in $\aut(T_{\Delta_1})$.

Since $\Gamma$ is discrete and $ \Gamma\cap \aut(T_{\Delta_1})$ has finite covolume (being a finite index subgroup of  $\aut(T_{\Delta_1})$),
the group  $ \Gamma\cap \aut(T_{\Delta_1})$ has finite index also in $\Gamma$. Thus $\Gamma$ is commensurable with  $ \aut(T_{\Delta_1})$,
hence $N$ is commensurable with the Gieseking manifold by Lemma~\ref{commens1}.

%We now consider the (infinite) chain
%$$
%\widetilde{c}=\sum_{i=1}^k \sum_{g\in\Gamma} \mu([\Delta_i])\, g\cdot \Delta_i\ .
%$$
%Since $\overline{\Omega}$ is finite and $\Gamma$ is discrete, the chain $\widetilde{c}$ is locally finite. Moreover, the $R$-equivariance of $\widetilde{c}$ ensures
%that $\widetilde{c}$ is a cycle. 
\end{proof}

Putting together Theorem~\ref{ergodic:prop} and Corollary~\ref{cor} we obtain the following:

\begin{cor}\label{description}
Let $N$ be a complete finite-volume $n$-hyperbolic manifold, $n\geq 3$, and suppose that $N$ is not commensurable with the Gieseking manifold. 
Then $N$ admits a unique efficient cycle, which is given by the measure
$$
\frac{1}{2 v_n} \cdot \mu_\eq\ .
$$
\end{cor}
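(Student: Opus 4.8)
The plan is to read off the statement from Theorem~\ref{ergodic:prop} and Corollary~\ref{cor}, the only additional point to settle being the mere \emph{existence} of an efficient cycle. I would organize the argument in three steps: existence of an efficient cycle via weak-* compactness; identification of every efficient cycle as a multiple of $\mu_\eq$; and pinning down the multiplicative constant.

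\emph{Existence.} By the very definition of $\|N\|$ there exist relative fundamental cycles of norm arbitrarily close to $\|N\|$, and by the discussion in Section~\ref{preliminaries} we may straighten and alternate them without increasing the norm; choosing them adapted to the thick parts $N_{2^{-i}}$ produces a minimizing sequence $c_i$. The associated measures $\Theta(c_i)\in\calM(\overline{S}_n^*(N))$ satisfy $\|\Theta(c_i)\|=\|c_i\|\to \|N\|$, so they are uniformly bounded in total variation. Since $\overline{S}_n^*(N)$ is second-countable, locally compact and Hausdorff, the space $C_0(\overline{S}_n^*(N))$ is separable and $\calM(\overline{S}_n^*(N))$ is its dual; hence by Banach--Alaoglu a bounded sequence admits a weak-* convergent subsequence. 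After passing to a subsequence with indices $i_j\geq j$ --- which is again a minimizing sequence, since a relative fundamental cycle for $N_{2^{-i_j}}$ is \emph{a fortiori} one for $N_{2^{-j}}$ --- the limit $\mu=\lim_j \Theta(c_{i_j})$ is by Definition~\ref{efficient:defn} an efficient cycle for $N$.

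\emph{Characterization and the constant.} I would first show that, under our hypothesis, every efficient cycle $\mu$ for $N$ is equidistributed. If $n\geq 4$, then $N$ is automatically not Gieseking-like and Theorem~\ref{kuessner:thm} already gives that $\mu$ is a multiple of $\mu_\eq$. If $n=3$, then since $N$ is not commensurable with the Gieseking manifold, the contrapositive of Theorem~\ref{ergodic:prop} shows that $N$ admits no non-equidistributed efficient cycle, so again $\mu=k\,\mu_\eq$ for some $k\in\matR$. In either case Corollary~\ref{cor} forces $k=1/(2v_n)$. Combining this with the existence established above, we conclude that $N$ admits exactly one efficient cycle, namely $\frac{1}{2v_n}\,\mu_\eq$.

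\emph{On the difficulty.} The substantive content is carried entirely by the two cited results: Theorem~\ref{ergodic:prop}, which rules out exotic (atomic, parabolic-vertex-supported) efficient cycles outside the Gieseking commensurability class, and Corollary~\ref{cor}, which in turn rests on the no-loss-of-mass Theorem~\ref{totalvariation}. The only genuinely new point in this deduction is existence, and there the single subtlety is that weak-* limits could a priori lose mass in the non-compact space $\overline{S}_n^*(N)$; this is precisely what Theorem~\ref{totalvariation} prevents, guaranteeing $\|\mu\|=\|N\|$ and in particular $\mu\neq 0$, so that the constant $k=1/(2v_n)$ is nonzero and the resulting efficient cycle is genuine rather than trivial.
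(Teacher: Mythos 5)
Your proposal is correct and follows essentially the same route as the paper: the corollary is obtained by combining the contrapositive of Theorem~\ref{ergodic:prop} (with the $n\geq 4$ case delegated to Kuessner's result, exactly as the paper does at the start of Section~\ref{unique:sec}) with Corollary~\ref{cor} to fix the constant. Your explicit Banach--Alaoglu existence step is a reasonable addition that the paper leaves implicit here but uses in the same form in its proof of Theorem~\ref{summarize}.
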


\section{Manifolds admitting non-equidistributed efficient cycles}\label{nonunique:sec}

In this section we prove that manifolds that are commensurable with the Gieseking manifold admit non-equidistributed efficient cycles. 
We will first prove that this phenomenon occurs for manifolds admitting an ideal triangulation by regular ideal tetrahedra, and we will then deduce the general
case from the fact that any manifold which is commensurable with the Gieseking manifold admits a finite covering with such a triangulation.

\subsection{Triangulations and ideal triangulations}
Let $\overline{N}$ be a compact 3-manifold with non-empty boundary made of tori. We recall the well-known notions of triangulation and ideal triangulation, widely used in 3-dimensional topology.

A \emph{triangulation} of $\overline{N}$ is a realization of $\overline{N}$ via a simplicial face-pairing of finitely many tetrahedra. A triangulation of $\overline{N}$ naturally induces a triangulation of its boundary. 

An \emph{ideal triangulation} of $\overline{N}$ (or of $N$) is a realization of $N=\inte{\overline{N}}$ as a simplicial face-pairing of finitely many tetrahedra, with all their vertices removed. The removed vertices are called \emph{ideal} and they correspond to the boundary components of $\overline{N}$; the link of every ideal vertex is a triangulation of the corresponding boundary component of $\overline{N}$.

We say as usual that $\overline{N}$ is \emph{hyperbolic} if its interior has a finite-volume complete hyperbolic metric. If $\overline{N}$ is hyperbolic, every geometric decomposition of $N$ into hyperbolic ideal tetrahedra is an example of ideal triangulation, that we call a \emph{geometric ideal triangulation} of $\overline{N}$ (or of $N$). We still do not know if every hyperbolic 3-manifold has a geometric ideal triangulation, but we know it does so virtually \cite{LST}.

We are interested here in transforming a geometric ideal triangulation into a triangulation in an efficient way. One method called \emph{inflation} was introduced by Jaco and Rubinstein in \cite{JR}. Here we introduce a similar method where we employ the dual viewpoint of simple spines, as  Matveev in \cite{Mat, Mat:book}, in a similar fashion as in~\cite{FFM}.

Consider a geometric ideal triangulation $T$ of $N$. We lift it to a geometric ideal triangulation $\widetilde T$ of the universal cover $\matH^3$. We choose some disjoint cusp sections in $N$; their pre-image consists of infinitely many disjoint horoballs in $\matH^3$, centered at the vertices of $\widetilde T$. 

For $\vare>0$ sufficiently small, the $\vare$-thick part $N_\vare$ of $N$ is obtained by removing from $N$ sufficiently deep cusp sections, and it is homeomorphic to $\overline{N}$. 
The ideal triangulation $T$ of $N$ restricts to a decomposition of $N_\vare$ into truncated tetrahedra. To obtain a triangulation for $N_\vare$  would now suffice to take its barycentric subdivision; however, this operation is not useful for us because it produces too many tetrahedra: we are looking for a triangulation for $N_\vare$ which contains the same number of tetrahedra as $T$, plus only a few more.

We explain our request more precisely. We say that a triangulation $T'$ of $N_\vare$ is \emph{adapted} to the geometric ideal triangulation $T$ if there is an injective map $i$ from the set of ideal tetrahedra of $T$ to the set of tetrahedra of $T'$ such that for every tetrahedron $\Delta$ of $T$, every lift of $i(\Delta)$ is a tetrahedron in $\matH^3$ whose vertices lie in the boundary of 4 removed horoballs whose centers are the vertices of a lift of $\Delta$. (We do not require the lift of $i(\Delta)$ to be a straight tetrahedron, only a topological one.) In some sense we require $\Delta$ and $i(\Delta)$ to be close. Every tetrahedron of $T'$ that is not in the image of $i$ is called \emph{residual}. 

We will need the following lemma, which says that for any hyperbolic manifold $N$ with a geometric ideal triangulation $T$ it is possible to construct a tower of finite coverings, each equipped with an adapted triangulation $T_i'$ whose residual tetrahedra grow sublinearly with respect to the degree of the cover.

\begin{prop} \label{tower:prop}
Let $N$ be a hyperbolic manifold equipped with a geometric ideal triangulation $T$. There is a tower of finite coverings $W_i \to \overline{N}$ of degree $d_i$ such that the following holds: every $W_i$ admits a triangulation $T_i'$ adapted to the geometric ideal triangulation $T_i$ obtained by lifting $T$, with $r_i$ residual tetrahedra, such that
$$\lim_{i\to \infty} \frac {r_i}{d_i} \to 0.$$
\end{prop}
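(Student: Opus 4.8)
The plan is to build the tower by a standard residual-finiteness argument and then, at each level, to apply the spine-thickening (inflation) construction of \cite{FFM} to the lifted ideal triangulation, arranging that the number of residual tetrahedra be controlled by the combinatorics of the cusp cross-sections alone. First I would fix a cofinal chain $\pi_1(\overline N)=G_0\supseteq G_1\supseteq\cdots$ of finite-index subgroups with trivial intersection, which exists since $\pi_1(\overline N)$ is residually finite, and let $W_i\to\overline N$ be the associated covering of degree $d_i\to\infty$. Lifting $T$ gives a geometric ideal triangulation $T_i$ of $W_i$ with exactly $d_i t$ tetrahedra, where $t$ is the number of tetrahedra of $T$, and lifting the chosen cusp sections realizes the $\vare_i$-thick part $(W_i)_{\vare_i}\cong W_i$. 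Applying the inflation procedure to $T_i$ produces a triangulation $T_i'$ adapted to $T_i$: each ideal tetrahedron $\Delta$ contributes one bulk tetrahedron $i(\Delta)$, whose four vertices sit on the four horospherical cusp sections as required by the definition of adapted triangulation, while the remaining residual tetrahedra are confined to a collar of $\partial W_i$. Since the bulk tetrahedra account for everything outside a neighbourhood of the cusps, the tetrahedron count $d_i t+r_i$ of $T_i'$ has a fixed leading term, and the whole problem reduces to estimating $r_i$.

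The main obstacle is precisely the sublinear bound $r_i=o(d_i)$, and it is genuinely delicate. The residual region is a collar $\partial W_i\times[0,1]$ whose inner boundary carries the induced vertex-link triangulation; this triangulation has $\sim d_i$ triangles, so a naive product (prism) triangulation of the collar would cost $\sim d_i$ tetrahedra and only give $r_i/d_i\to\text{const}$, which is not enough. Two ingredients go into beating this. On the geometric side, the continuity of the incenter and inradius (\cite[Lemma 3.12]{FFM}) together with \cite[Proposition 3.14]{FFM} guarantee that the bulk tetrahedra are almost regular and fill all but a region whose volume can be made an arbitrarily small fraction of $\vol(W_i)=d_i\vol(N)$ by pushing the cusp sections deep, i.e.\ by letting $\vare_i\to 0$; hence the residual region has volume $o(d_i)$, and the elementary lower bound ``(number of tetrahedra) $\geq$ (volume)$/v_n$'' no longer obstructs sublinearity. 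The combinatorial side is the crux: one must triangulate each cusp collar with an overhead that grows strictly slower than the collar's combinatorial size, rather than as the flat product. This is exactly the step I expect to require the full strength of the construction of \cite{FFM} (in the spirit of the inflation of \cite{JR} and of Matveev's simple spines \cite{Mat,Mat:book}), where the residual tetrahedra are organized around the spine near the boundary and the tower is chosen so that their total number is forced to be sublinear in the degree.

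Finally, I would check that $T_i'$ is a genuine triangulation of $W_i\cong\overline{W_i}$ and record the resulting estimate $r_i/d_i\to 0$, which is all that the statement asserts. This is also precisely the input needed downstream: the fundamental cycle carried by $T_i'$, divided by $d_i$ and pushed forward along $W_i\to\overline N$, has $\ell^1$-norm $t+r_i/d_i$, so the sublinear control of $r_i$ is exactly what lets this pushed-forward sequence have norm converging to the simplicial volume and hence serve as a minimizing sequence after straightening and alternating.
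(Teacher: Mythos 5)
Your proposal correctly sets up the framework (dual spine / inflation, one bulk tetrahedron per ideal tetrahedron of the lifted triangulation, residual tetrahedra confined to the cusp collars) and correctly identifies the sublinear bound $r_i=o(d_i)$ as the crux, but at that crux it stops: the sentence ``this is exactly the step I expect to require the full strength of the construction of \cite{FFM}'' acknowledges that the key estimate is missing rather than proving it. The volume argument you offer on the ``geometric side'' does not help: the inequality $\#\{\text{tetrahedra}\}\geq \vol/v_n$ is a lower bound, so showing that the residual region has volume $o(d_i)$ removes an obstruction but yields no upper bound on $r_i$ (the residual tetrahedra are only topological and can be squeezed into arbitrarily small volume in any case).

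More seriously, the tower you choose is the wrong one: a generic cofinal chain of finite-index subgroups coming from residual finiteness does not suffice. The number of residual tetrahedra in the adapted triangulation of $W_i$ is governed by the number of intersections of the chosen $\theta$-shaped spine of each boundary torus $\widetilde S\subseteq\partial W_i$ with the cellularization pulled back from the vertex links of $T$. A $\theta$-spine must carry loops generating $\pi_1(\widetilde S)$, so if $\widetilde S\to S$ is a ``long and thin'' covering (say corresponding to $\mathbb{Z}\times d\mathbb{Z}<\mathbb{Z}^2$), any such spine crosses on the order of $d$ cells and one only obtains $r_i\sim d_i$, i.e.\ $r_i/d_i\to\mathrm{const}$. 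This is precisely why the paper invokes Hamilton's theorem to produce a tower of $(ik_i)$-\emph{characteristic} coverings: each boundary torus is then covered as an $(ik_i)\times(ik_i)$ square grid of degree $(ik_i)^2$, the lifted $\theta$-graph can be pruned to a spine $\widetilde Y'$ meeting only $O(ik_i)$ cells, and hence $r_i=O(ik_ih_i)$ while $d_i=(ik_i)^2h_i$, giving $r_i/d_i\to 0$. Without the characteristic condition on the boundary and the explicit pruning of the lifted spine, the sublinear estimate does not follow from your construction.
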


Subsections~\ref{adapted:subsection} and~\ref{char:sub} are devoted to a proof of this proposition.
\subsection{Construction of an adapted triangulation} \label{adapted:subsection}
We introduce an efficient method to transform a geometric ideal triangulation $T$ of a hyperbolic $N$ into a triangulation $T'$ that is adapted to $T$.
%We do this using the dual viewpoint of special spines, defined by Matveev \cite{Mat}. We work in the PL category.

\begin{figure}
 \begin{center}
  \includegraphics[width = 9 cm]{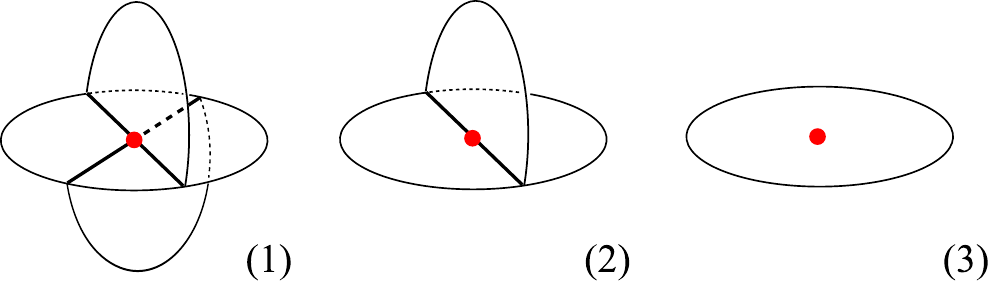}
 \end{center}
 \nota{Neighborhoods of points in a simple polyhedron.}
 \label{models:fig}
\end{figure}

A compact $2$-dimensional polyhedron $X$ is \emph{simple} if every point $x$ of $X$ has a star neighborhood PL-homeomorphic to one of the three models shown in Figure \ref{models:fig}. Points of type (1) are called \emph{vertices}. Points of type (2) and (3) form respectively some manifolds of dimension 1 and 2: their connected components are called respectively \emph{edges} and \emph{regions}. A simple polyhedron $X$ is \emph{special} if every edge is an open segment and every region is an open disc, so in particular it has a natural CW structure.

Let $\overline{N}$ be a compact $3$-manifold with (possibly empty) boundary. A compact 2-dimensional subpolyhedron $X\subset N= \inte{\overline{N}}$ is a \emph{spine} of $\overline{N}$ if $\overline{N}\setminus X$ consists of an open collar of $\partial \overline{N}$ and some (possibly none) open balls (the presence of some open balls is necessary when $\partial \overline{N} = \emptyset$). 

\begin{figure}
\begin{center}
\includegraphics[width = 8 cm] {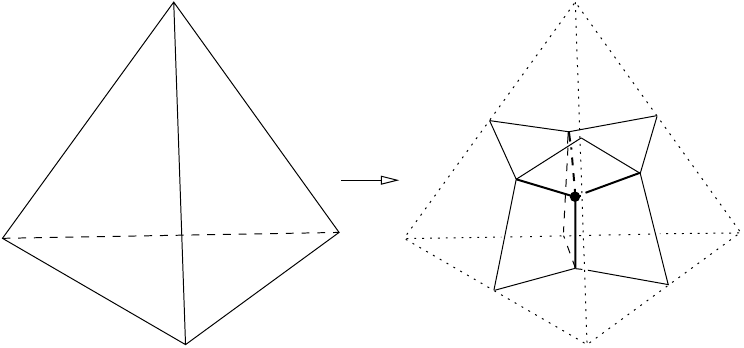}
\nota{By dualizing an ideal triangulation we get a simple spine.}
\label{dualspine:fig}
\end{center}
\end{figure}

Let $\overline{N}$ be a compact manifold with boundary made of tori. Suppose that $N$ is hyperbolic and equipped with a geometric ideal triangulation $T$. We now describe a method to construct a triangulation $T'$ for $\overline{N}\cong N_\vare$ adapted to $T$.

First, we dualise the ideal triangulation $T$ to get a special spine $X$ of $\overline{N}$ with one vertex at the barycenter of each ideal tetrahedron as shown in Figure \ref{dualspine:fig}.

Second, we add some cells to $X$ to obtain a new special polyhedron $X'$, so that by dualizing $X'$ back we will get our desired triangulation $T'$. We construct $X'$ as follows. By construction $\overline{N}\setminus X$ consists of an open collar of $\partial \overline{N}$, that is a finite union of products $S\times (0,1]$ where $S$ is a torus and $S \times \{1\}$ is a boundary component of $N$. 
Choose a $\theta$-shaped graph $Y \subset S$ that is itself a spine of $S$, \emph{i.e.}~$S\setminus Y$ consists of an open disc. Add to $X$ the polyhedron
$$Y \times (0,1] \bigcup S \times \{1\}.$$

If we do this at each product $S\times (0,1]$ in $\overline{N}\setminus X$, we obtain a 2-dimensional polyhedron $X' \subset \overline{N}$ that contains $\partial \overline{N}$. If $Y$ is chosen generically, the polyhedron $X'$ is special. The complement $\overline{N}\setminus X'$ consists of open balls, one for each boundary component of $\overline{N}$.

As we mentioned above, the triangulation $T'$ for $\overline{N}$ is constructed by dualizing $X'$ in the appropriate way. Every boundary torus $S$ of $\overline{N}$ inherits from $X'$ a cellularization with two vertices, three edges, and one disc (the cellularization depends on the chosen $\theta$-shaped spine $Y$); this cellularization is dualised to a one-vertex triangulation for $S$. This triangulation extends from $\partial \overline{N}$ to $\overline{N}$ as follows: every disc, edge, and vertex of $X'$ that is not adjacent to $\partial \overline{N}$ dualizes to an edge, a triangle, and a tetrahedron for $T'$.

The resulting triangulation $T'$ has the smallest possible number of vertices: one for each boundary component. The tetrahedra of $T$ are in natural 1-1 correspondence with the vertices of $X$. The tetrahedra of $T'$ are in natural 1-1 correspondence with the vertices of $X'$ that are not contained in $\partial \overline{N}$. Since every vertex of $X$ is also a vertex of $X'$ of this kind, we get a natural injection $i$ from the set of tetrahedra of $T$ into the set of tetrahedra of $T'$.

\begin{lemma}
If $T$ is a geometric ideal triangulation for $\overline{N}$, the triangulation $T'$ is adapted to $T$.
\end{lemma}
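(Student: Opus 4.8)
The injection $i$ is already handed to us by the construction: each vertex of $X$ sits at the barycenter of an ideal tetrahedron of $T$, hence lies in $\inte{\overline{N}}$ and is in particular a vertex of $X'$ not adjacent to $\partial\overline{N}$; such vertices dualize bijectively onto a subset of the tetrahedra of $T'$, and distinct ideal tetrahedra of $T$ yield distinct barycenters, hence distinct vertices of $X$. So $i$ is well defined and injective, and the entire content of the lemma is the geometric clause describing the position of the vertices of the lifts of $i(\Delta)$. That is where the plan concentrates.

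The plan is to read off the location of these vertices directly from the spine--triangulation duality, after lifting to $\matH^3$. Recall that in this duality a tetrahedron of $T'$ dual to a vertex $w$ of $X'$ has its four vertices sitting in the four components of $\overline{N}\setminus X'$ incident to $w$, one in each. Fix an ideal tetrahedron $\Delta$ of $T$, let $v$ be its barycenter (a vertex of $X$, hence of $X'$), and choose a lift $\widetilde{\Delta}$ of $\Delta$ to $\matH^3$ with ideal vertices the parabolic points $p_0,p_1,p_2,p_3$ and removed horoballs $B_0,\dots,B_3$ centered at them; let $\widetilde{v}$ be the corresponding lift of $v$. Then the four vertices of the lift $\widetilde{i(\Delta)}$ sit, one each, in the four components of the complement of the lifted spine $\widetilde{X}'$ that are incident to $\widetilde{v}$.

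The key step is to identify those four components. Within a single truncated ideal tetrahedron the dual spine $X$ is the standard two-complex whose complement is a collar of the four truncation triangles; concretely, the complement of $X$ inside $\widetilde{\Delta}\setminus(B_0\cup\dots\cup B_3)$ consists of exactly four pieces, the $j$-th being a collar of the truncation triangle $\widetilde{\Delta}\cap \partial B_j$ cut out by the horoball $B_j$ centered at $p_j$. Globally these pieces glue across the faces of $T$ into the cusp collars forming $\overline{N}\setminus X$. Passing from $X$ to $X'$ only inserts the cells $Y\times(0,1]\cup S\times\{1\}$ inside these collars, away from the interior vertex $v$; this subdivides each collar into a ball but alters neither the local structure of the spine at $\widetilde{v}$ nor the four complementary regions incident to it. Hence the four components of $\overline{N}\setminus X'$ incident to $\widetilde{v}$ are the four collars of the truncation triangles $\widetilde{\Delta}\cap\partial B_j$, $j=0,\dots,3$, and these are genuinely four distinct regions because $p_0,\dots,p_3$ are four distinct ideal points.

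Finally, every vertex of $T'$ lies on $\partial\overline{N}\cong \partial N_\vare$, which lifts to the union of the horospheres $\partial B_j$; therefore the vertex of $\widetilde{i(\Delta)}$ dual to the $j$-th region lies on $\partial B_j$. The four vertices of $\widetilde{i(\Delta)}$ thus lie on $\partial B_0,\dots,\partial B_3$, whose centers $p_0,\dots,p_3$ are exactly the vertices of the lift $\widetilde{\Delta}$ of $\Delta$, and by $\Gamma$-equivariance the same holds for every lift of $i(\Delta)$, with $\widetilde{\Delta}$ replaced by the appropriate $\Gamma$-translate. This is precisely the adapted condition. I expect the genuine obstacle to be the bookkeeping in the middle step: making the duality dictionary precise enough to guarantee that the four complementary regions at $\widetilde{v}$ are four distinct horoball corners centered at the four ideal vertices of $\widetilde{\Delta}$, and verifying carefully that the cells added to form $X'$ live deep in the cusp collars and never interfere with the interior vertex $v$ nor merge two of these regions as seen from $\widetilde{v}$.
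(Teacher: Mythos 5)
Your proof is correct and follows essentially the same route as the paper's (which is much terser): both arguments rest on the facts that all vertices of $T'$ lie on the cusp sections, hence lift to the removed horospheres, and that by spine--triangulation duality the tetrahedron $i(\Delta)$ dual to the barycentric vertex of $\Delta$ has one vertex in each of the four cusp corners cut off by the horoballs at the ideal vertices of a lift of $\Delta$. Your version merely spells out the duality bookkeeping that the paper compresses into ``by construction''.
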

\begin{proof}
We fix some disjoint horocusp sections and truncate $N$ along these, to obtain a smaller copy $N_\vare$ of $\overline{N}$.
Their preimage in $\matH^3$ consists of horospheres. When passing from $X$ to $X'$ we add the cusp sections $\partial N_\vare$ and some products $Y\times (0,1]$. In $\matH^3$ this corresponds to adding the horospheres and some products $\widetilde Y \times (0,1]$. The resulting dual triangulation $T'$ has all its vertices in the cusp sections, which lift to vertices in the horospheres. By construction for every ideal tetrahedron $\Delta$ in $T$ the corresponding $i(\Delta)$ has its vertices in the same horospheres that are crossed by the edges of $\Delta$.
\end{proof}

The residual tetrahedra correspond to the vertices of $X'$ contained in the interior of $\overline{N}$ that were not themselves vertices of $X$, and that were created by attaching the products $Y \times (0,1]$ along some generic map $Y \to X$. We now need to construct some tower of coverings where this kind of vertices grow sublinearly in number.

\subsection{Characteristic coverings}\label{char:sub}
We now build the tower of coverings for $\overline{N}$ needed in Proposition \ref{tower:prop}. 
We will use some results of Hamilton \cite{Ham} on coverings determined by characteristic subgroups. A similar construction was made in \cite[Section 5.3]{FFM}.

Recall that a \emph{characteristic subgroup} of a group $G$ is a subgroup $H<G$ which is invariant by any automorphism of $G$. For a natural number $x\in \matN$, the \emph{$x$-characteristic} subgroup of $\matZ\times \matZ$ is the subgroup $x(\matZ\times\matZ)$ generated by $(x,0)$ and $(0,x)$. It has index $x^2$ if $x>0$ and $\infty$ if $x=0$. The characteristic subgroups of $\matZ\times \matZ$ are precisely the $x$-characteristic subgroups with $x\in \matN$. It is easy to prove that a subgroup of $\matZ\times\matZ$ of index $x$ contains the $x$-characteristic subgroup.

A covering $p\colon \widetilde T\to T$ of tori is \emph{$x$-characteristic} if $p_*(\pi_1(\widetilde T))$ is the $x$-characteristic subgroup of $\pi_1(T)\cong \matZ\times \matZ$. A covering $p\colon \widetilde {\overline{N}} \to \overline{N}$ of 3-manifolds bounded by tori is \emph{$x$-characteristic} if the restriction of $p$ to each boundary component of $\widetilde{\overline{N}}$ is $x$-characteristic.

Lemma 5 from \cite{Ham} implies the following.

\begin{lemma}[E. Hamilton] \label{Hamilton:lemma}
Let $\overline{N}$ be a hyperbolic compact, orientable 3-manifold with boundary tori. For every integer $i>0$ there exist an integer $k>0$ and a finite-index normal subgroup $K_i\triangleleft \pi_1(\overline{N})$ such that $K_i\cap \pi_1(T_{j})$ is the characteristic subgroup of index $(ik)^2$ in $\pi_1(T_{j})$, for each component $T_{j}$ of $\partial \overline{N}$. Hence the covering $W_i \to \overline{N}$ corresponding to $K_i$ is $(ik)$-characteristic.
\end{lemma}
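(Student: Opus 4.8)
The plan is to deduce the statement from Hamilton's Lemma 5 in \cite{Ham} by a short translation into the language of peripheral subgroups, together with some elementary bookkeeping about subgroups of $\matZ\times\matZ$. Write $G=\pi_1(\overline{N})$ and let $P_j=\pi_1(T_j)\cong \matZ\times\matZ$ be the peripheral subgroups, one for each boundary torus $T_j$. The first thing to record is purely group-theoretic: the characteristic subgroups of $\matZ\times\matZ$ are exactly the $x$-characteristic subgroups $x(\matZ\times\matZ)$, which have index $x^2$, and every subgroup of index $x$ contains the $x$-characteristic subgroup. Thus asking that $K_i\cap P_j$ be the characteristic subgroup of index $(ik)^2$ is the same as asking that the restriction to $P_j$ of the quotient map $G\to G/K_i$ have kernel exactly $(ik)P_j$, i.e.\ that it induce the natural projection $P_j\to P_j/(ik)P_j\cong (\matZ/ik\matZ)^2$.

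In these terms the goal becomes: for every $i>0$ produce a finite quotient $\phi\colon G\to Q$ and an integer $k>0$ such that, for every boundary component $T_j$, the restriction $\phi|_{P_j}$ has kernel precisely $(ik)P_j$ --- simultaneously for all $j$, and with the same multiplier $ik$ on each cusp. I would then set $K_i=\ker\phi$, which is normal of finite index in $G$, and the covering $W_i\to\overline{N}$ associated to $K_i$ is $(ik)$-characteristic by the very definition above, each boundary torus being covered with degree $(ik)^2$.

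The heart of the matter is therefore the simultaneous realization of the peripheral characteristic subgroups inside a single finite quotient, and this is exactly what Hamilton's Lemma 5 supplies, since it controls the images of the peripheral subgroups of a cusped hyperbolic $3$-manifold group in suitable finite quotients. Concretely, I would apply it starting from the $i$-characteristic subgroups $iP_j\le P_j$; Hamilton's result yields a finite-index normal subgroup whose intersection with each $P_j$ is a characteristic subgroup, at the cost of replacing the multiplier $i$ by a common multiple $ik$ chosen uniformly over the finitely many cusps. The passage from $i$ to $ik$ is forced precisely because the different boundary tori need not admit a normal cover realizing the same degree $i^2$ on each of them; multiplying by a suitable $k$ (depending on $\overline{N}$ and $i$) equalizes the degrees, and, using that any index-$x$ subgroup of $\matZ\times\matZ$ contains the $x$-characteristic subgroup, lets one pin the intersection down to the characteristic subgroup exactly rather than merely up to containment.

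The main obstacle is this last point: obtaining the intersections \emph{exactly} (equal to, not merely contained in, the characteristic subgroup) and with a \emph{uniform} index across all boundary tori, through a single \emph{normal} finite-index subgroup. Separability alone only yields containment $K\cap P_j\subseteq iP_j$, while normality alone only forces $[P_j:K\cap P_j]$ to divide the covering degree; it is the specific strength of Hamilton's Lemma 5 that bridges this gap. Once it is invoked, the remaining content is the elementary index computation $[\,P_j:(ik)P_j\,]=(ik)^2$ recorded above, which identifies the covering $W_i\to\overline{N}$ as $(ik)$-characteristic and completes the proof.
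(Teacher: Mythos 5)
Your proposal is correct and takes essentially the same route as the paper, which offers no proof beyond the single sentence that Lemma 5 of Hamilton's paper implies the statement; like the paper, you outsource the entire nontrivial content (the simultaneous, exact realization of peripheral characteristic subgroups of uniform index inside one finite-index normal subgroup) to Hamilton's result and reduce the rest to the elementary index computation $[P_j:(ik)P_j]=(ik)^2$. The additional bookkeeping you supply is accurate and consistent with the paper's definitions of $x$-characteristic subgroups and coverings.
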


We can now prove Proposition \ref{tower:prop}. We restate it for the sake of clarity.

\begin{varthm}[Proposition~\ref{tower:prop}]
Let $N$ be a hyperbolic manifold equipped with a geometric ideal triangulation $T$. There is a tower of finite coverings $W_i \to \overline{N}$ of degree $d_i$ such that the following holds: every $W_i$ admits a triangulation $T_i'$ adapted to the geometric ideal triangulation $T_i$ obtained by lifting $T$, with $r_i$ residual tetrahedra, such that
$$\lim_{i\to \infty} \frac {r_i}{d_i} \to 0.$$
\end{varthm}

\begin{proof}
Let $X$ be the spine dual to $T$. Following Section \ref{adapted:subsection} we enlarge $X$ to a special polyhedron $X'$ by adding one piece
$$Y \times (0,1] \bigcup S \times \{1\}$$
for each boundary torus $S$ of $\overline{N}$, inside the corresponding collar $S \times (0,1]$ in $\overline{N}\setminus X$. This operation depends on the choice of a generic $\theta$-shaped spine $Y\subset S$.

\begin{figure}
\begin{center}
\includegraphics[width = 12.5 cm] {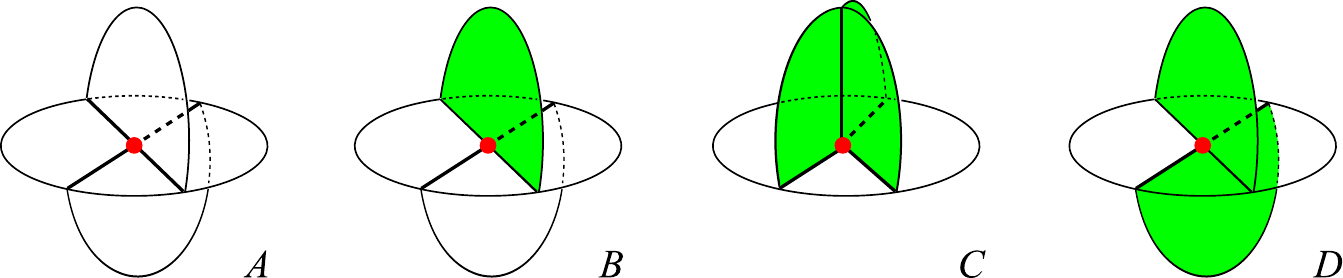}
\nota{We colour in green the regions of the inserted portions $Y\times (0,1) \cup S \times \{1\}$. There are four types of vertices $A$, $B$, $C$, and $D$ in the spine $Q$, according to the colours of the incident regions.}
\label{types:fig}
\end{center}
\end{figure}

The polyhedron $X'$ has all the vertices of $X$, plus some additional ones that we now investigate carefully. The following discussion is similar to \cite[Section 5.4]{FFM}. Colour in white the regions of $X$ and in green the regions in the products $Y\times (0,1] \cup S \times \{1\}$ that are attached to $X$. There are five types $A, B, C, D, E$ of vertices in $X'$ according to the colours of the incident regions: the vertices of type $A,B,C,D$ are shown in Figure~\ref{types:fig}, while those of type $E$ are those that lie in $\partial \overline{N}$ and that are incident to green regions only. The vertices of type $A$ are precisely those of $X$. The vertices of type $B,C,D$ are dual to the residual tetrahedra of $T'$, and we want to control their number. Those of type $E$ are not interesting here.

For every boundary torus $S$, the collar map $S \to X$ is a (possibly non-injective) immersion, and the cellularization of $X$ pulls back to a cellularization of $S$, which is in fact dual to the triangulation link of the corresponding ideal vertex of $T$. The $\theta$-shaped spine $Y$ is generic, transverse to this cellularization as in Figure \ref{tori:fig} (left). The four types of vertices $A,B,C,D$ that may arise are shown in Figure \ref{tori:fig} (right).

\begin{figure}
\begin{center}
\includegraphics[width = 9 cm] {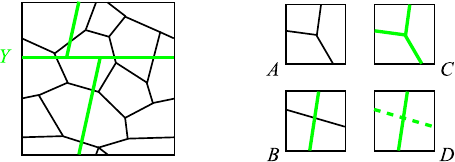}
\nota{The cellularization of a boundary torus $S$ induced by the collar map $S\to X$, and the $\theta$-shaped spine $Y$ of $S$ coloured in green (left). The four types of vertices $A$, $B$, $C$, $D$ (right).}
\label{tori:fig}
\end{center}
\end{figure}

Let $v_A$, $v_B$, $v_C$, and $v_D$ be the number of vertices of type $A$, $B$, $C$, and $D$ in $X'$. The number of residual tetrahedra in $T'$ is $v_B+v_C+v_D$.

We build the tower of coverings. By Lemma \ref{Hamilton:lemma}, for every integer $i\geq 1$, there are a $k_i>0$ and an $(ik_i)$-characteristic covering $W_i \to \overline{N}$. 

We now construct the triangulation $T_i'$ adapted to the lifted geometric ideal triangulation $T_i$ of $W_i$. The pre-image of $X$ is a spine $X_i$ of $W_i$ dual to $T_i$.
To construct the adapted triangulation $T_i'$, we choose an appropriate $\theta$-shaped spine inside every boundary torus of $W_i$. We explain now how to make this choice.

\begin{figure}
\begin{center}
\includegraphics[width = 12.5 cm] {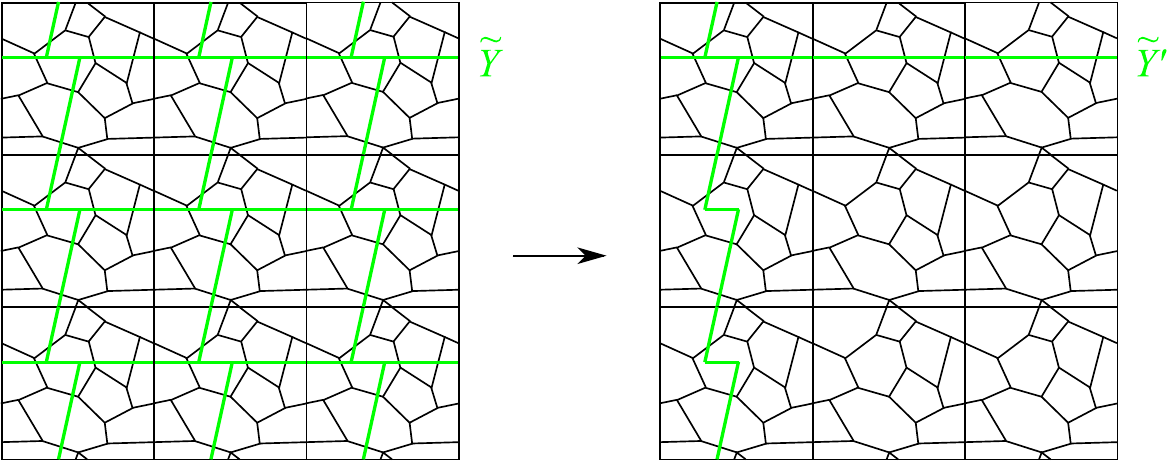}
\nota{A 3-characteristic covering $\widetilde S \to S$. The spine $Y$ of $S$ lifts to the green spine $\widetilde Y$ shown in the left picture. We can eliminate most of its edges and still get a spine $\widetilde Y'$ of $\widetilde S$.}
\label{tori2:fig}
\end{center}
\end{figure}

Since the covering $W_i \to \overline{N}$ is $(ik_i)$-characteristic, every boundary torus $\widetilde S$ of $W_i$ covers a torus $S$ of $\overline{N}$ as an $(ik_i)$-characteristic covering. The case $ik_i=3$ is shown in Figure \ref{tori2:fig}.
We have chosen in the previous paragraphs a spine $Y$ for $S$, see Figure \ref{tori:fig}. As shown in Figure~\ref{tori2:fig}-(left), the pre-image $\widetilde Y$ of $Y$ in $\widetilde S$ is a spine of $\widetilde S$, whose complement in $S$ consists of $(ik_i)^2$ discs. Figure \ref{tori2:fig}-(right) shows that we can eliminate most vertices and edges of $\widetilde Y$ and obtain a simpler spine $\widetilde Y'\subset \widetilde Y$ of  $\widetilde S$, whose complement in $\widetilde S$ consists of only one disc. This is the $\theta$-shaped spine that we use on each boundary component $\widetilde S$ of $W_i$. 

It remains to estimate the number $r_i$ of residual tetrahedra in $T_i'$. Recall that
$$r_i = v_B^i + v_C^i + v_D^i$$
where $v_B^i, v_C^i, v_D'$ are the numbers of vertices of type $B, C, D$ in the dual polyhedron $X_i'$.  The covering $W_i \to \overline{N}$ has degree 
$$d_i = (ik_i)^2h_i$$ 
where $h_i$ is the number of distinct boundary tori in $\partial W_i$ that project to one boundary torus of $\overline{N}$. 
It is clear from Figure \ref{tori2:fig} that
$$v_B^i \leq 2ik_ih_iv_B, \quad v_C^i \leq 2v_C, \quad v_D^i \leq 2ik_ih_iv_D.$$
Therefore
$$\frac{r_i}{d_i} = \frac{v_B^i + v_C^i + v_D^i}{(ik_i)^2h_i} \leq \frac{2ik_ih_i}{(ik_i)^2h_i} (v_B+v_C+v_D) \longrightarrow 0$$
as $i\to \infty$. The proof is complete.
\end{proof}

\subsection{Efficient cycles from regular ideal triangulations}\label{mu:subsec}
We are now ready to show that if a hyperbolic $3$-manifold $N$ admits a geometric ideal triangulation $T$ by regular ideal tetrahedra, then it also admits 
a non-equidistributed efficient cycle. Indeed, let $\Delta_1,\Delta_2,\dots, \Delta_h$ be the regular ideal tetrahedra of $T$, considered as subsets of $N$. 
For every $i=1,\dots,k$ we denote by $\widetilde{\sigma_i}\in\reg^+(\matH^3)\subseteq (\mathbb{H}^3)^4$ a (positively oriented) ordering $(\widetilde{v}_0,\dots,\widetilde{v}_3)$ of
the set of vertices of a lift of $\Delta_i$ to $\matH^3$, and by $\sigma_i$ the class of $\widetilde{\sigma}_i$ in $\reg^+(N)$.   
Finally, we set
$$
\mu_T=\Theta \left(\alt_3\left(\sum_{i=1}^k \sigma_i\right)\right)\ .
$$
(Strictly speaking, we defined the alternating operator only on simplices with vertices in $\matH^3$, but of course it may be extended by the same formula also on ideal simplices). 

The main result of this section is Theorem~\ref{explicit:thm}, which we recall here for the convenience of the reader:

\begin{varthm}[Theorem~\ref{explicit:thm}]
Let $N$ be a complete finite-volume $3$-manifold admitting a decomposition $T$ into regular ideal tetrahedra. Then $\mu_T$ is an efficient cycle for $N$.
\end{varthm}
\begin{proof}
Let us fix some notation. As usual, for every sufficiently large $i\in\mathbb{N}$ we fix an identification $\overline{N}\cong N_{2^{-i}}$ between the natural compactification of $N$ and 
the $2^{-i}$-thick part of $N$. 
By Proposition~\ref{tower:prop}, there is a tower of finite coverings $W_i \to N_{2^{-i}}$ of degree $d_i$ such that the following holds: every $W_i$ admits a triangulation $T_i'$ adapted to the geometric ideal triangulation $T_i$ obtained by lifting $T$, with $r_i$ residual tetrahedra, such that
\begin{equation}\label{pochi}
\lim_{i\to \infty} \frac {r_i}{d_i} \to 0\ .
\end{equation}

For every sufficiently large $i\in\mathbb{N}$, we construct a relative fundamental cycle $c_i$ for $N_{2^{-i}}$ as follows. The universal covering of $W_i$ coincides with the universal covering
of $N_{2^{-i}}$ (which is the complement of a collection of disjoint horoballs in $\matH^3$), hence we may apply the straightening operator to any positively-oriented parametrization
of any simplex appearing in $T'_i$; after applying the alternating operator to the sum of the obtained straight tetrahedra, we get  a relative fundamental cycle
$\widetilde{c}_i$ for $W_i$ (more precisely, for the pair $(W'_i,W'_i\setminus \inte{W_i})$, where $W'_i$ is the complete finite-volume hyperbolic manifold obtained
from $W_i$ by adding back the removed cusps). If $p_i\colon (W'_i,W'_i\setminus \inte{W_i})\to (N,N\setminus \inte{N_{2^{-i}}})$ is the covering projection, we then set
$$
c_i=\frac{(p_i)_* (\widetilde{c}_i)}{d_i}\ .
$$
For simplicity, we will say that a simplex appearing in $c_i$ is \emph{non-residual} if it is obtained (via $(p_i)_*$) from the alternation of the straightening
of a non-residual simplex of $T_i'$.

It is easy to check that $c_i, i\in\mathbb{N}$ is a minimizing sequence: indeed, if $k$ is the number of the tetrahedra of $T$, then
$\vol(N)=kv_3$, hence $\|N\|=\vol(N)/v_3=k$. On the other hand, by construction the number of non-residual simplices in $\widetilde{c}_i$ is equal to $kd_i$ and the alternating operator
is norm non-increasing, hence 
$$
\limsup_{i\to +\infty} \|c_i\|=\limsup_{i\to +\infty} \frac{\|(p_i)_* (\widetilde{c}_i)\|}{d_i}\leq \limsup_{i\to +\infty} \frac{ \|\widetilde{c}_i\|}{d_i}=\limsup_{i\to +\infty} \frac{ k d_i +r_i }{d_i}= k\ ,
$$
and this proves that the sequence $c_i$, $i\in\mathbb{N}$ is minimizing.

In order to conclude we are then left to show that
$$
\lim_{i\to +\infty} \Theta (c_i)=\mu_T\ .
$$
Let $\Delta_0\in \reg^+(N)$ be a (positively oriented representative of a) tetrahedron of $T$, and let $\widetilde{\Delta}_0\in \reg(\matH^3)$ be a lift of $\Delta_0$ to $\matH^3$ with vertices
$(v_0,v_1,v_2,v_3)$. There exist pairwise disjoint open neighbourhoods $U_0,\dots,U_3$ of $v_0,\dots,v_3$ in $\overline{\matH}^3$ such that the following conditions hold:
every straight tetrahedron having its $i$-th vertex in $U_i$ is nondegenerate and positively oriented, and the tetrahedron
$\widetilde{\Delta}_0=(v_0,v_1,v_2,v_3)$ is the unique lift of tetrahedra of $T$ whose vertices lie (in the correct order) in $U_0,\dots,U_3$. We set
$$
\widetilde{\Omega}=\{(v_0,v_1,v_2,v_3)\in \overline{S}_3^*({\matH}^3)\, |\, v_i\in U_i\ \text{for every}\ i=0,1,2,3\}
$$
and we let $\Omega$ be the projection of $\widetilde{\Omega}$ in $\overline{S}_3^*(N)$. Of course, $\widetilde{\Omega}$ is an open neighbourhood
of $\widetilde{\Delta}_0$ in $\overline{S}_3^*({\matH}^3)$, and since the projection $\overline{S}_3^*(\matH^3)\to \overline{S}_3^*(N)$ is open,
the set $\Omega$ is an open neighbourhood of $\Delta_0$ in $\overline{S}_3^*(N)$. 

Let now $f\colon  \overline{S}_3^*(N)$ be any continuous
compactly supported function 
%whose support is contained in $\Omega$ and is 
such that $f(\Delta_0)=1$. 
Recall that the vertices of the lifts of non-residual tetrahedra of $T'_i$ lie on the boundary of (deeper and deeper, as $i\to +\infty$) removed horoballs
centered at the ideal vertices of lifts of tetrahedra of $T$. 
We say that a simplex $\sigma'$ appearing in the cycle $c_i$ is a \emph{relative} of $\Delta_0$ if 
it is non-residual and it admits a lift to $\matH^3$ with vertices on horospheres centered
at the ideal vertices of a lift of $\Delta_0$ (in the correct order).

Thanks to our definition of $\Omega$,
we can choose $i\in \mathbb{N}$ such that, if $\sigma$ is a non-residual simplex appearing in $c_i$, then $\sigma$ belongs to $\Omega$ 
if and only if it is a relative of $\Delta_0$. Let us now decompose $c_i$ as follows:
$$
c_i=c_i^0+c_i^{nr}+c_i^r\ ,
$$
where $c_i^0$ is supported on relatives of $\Delta_0$, $c_i^{nr}$ is supported on non-residual simplices which are not relatives of $\Delta_0$, and
$c_i^r$ is supported on residual simplices. Since the simplices appearing in $c_i^{nr}$ cannot belong to $\Omega$ for $i$ sufficiently large, we have 
\begin{equation}\label{nr}
\lim_{i\to +\infty} \int_\Omega f\, d\Theta(c_i^{nr})=0\ .
\end{equation}
Recall now that the alternating operator associates to every simplex the average of 24 singular simplices, and that positively-oriented simplices come with the coefficient
$+1/24$. Therefore, $c_i^0$ is a linear combination of $d_i$ simplices, each of which comes with the real coefficient $1/(24d_i)$. In particular,
we have $\|c_i^0\|=1/24$. In the very same way, if one starts with a negatively oriented
$\Delta_0$, still $\|c_i^0\|=1/24$ but the coefficients appearing in $c_i^0$ are all negative. As a consequence, since $f(\Delta_0)=1$ and the   simplices appearing
in $c_i^0$ are converging to $\Delta_0$ in $\overline{S}^*_3(N)$ (and $f$ is continuous), we get
\begin{equation}\label{nr0}
\lim_{i\to +\infty} \int_\Omega f \, d\Theta(c_i^0)=\|c^0_i\|=\frac{1}{24}
\end{equation}
(while, if $\Delta_0$ were negatively oriented, we would have $\lim_{i\to +\infty} \int_\Omega f \, d\Theta(c^0_i)=-\|c^0_i\|=-\frac{1}{24}$). 

Finally from~\eqref{pochi} we deduce that $\lim_{i\to +\infty} \|c_i^r\|=0$, hence
%the fact that the percentage of residual simplices
%tends to $0$ as $i\to \infty$ is it easily deduced that 
\begin{equation}\label{r}
\lim_{i\to +\infty} \int_\Omega f\, d\Theta(c_i^{r})=0\ .
\end{equation}
 Putting together \eqref{nr}, \eqref{nr0} and \eqref{r} we then obtain
  $$
\lim_{i\to +\infty} \int_\Omega f \, d\Theta(c_i)=\pm\frac{1}{24}\ ,
$$
where the sign depends on the fact whether $\Delta_0$ is positively or negatively oriented. 

Let us now denote by $\mu$ the limit of $\Theta(c_i)$ (which we may assume to exist, up to passing to a subsequence; in fact, with a little more effort we could easily prove that the sequence $\Theta(c_i)$, $i\in\mathbb{N}$, is itself convergent).
Due to the definition of weak-* convergence, we have thus proved that
there exists a neighbourhood $\Omega$ of $\Delta_0$ such that, for every compactly supported $f\colon \overline{S}^*_3(N)\to\mathbb{R}$ with $f(\Delta_0)=1$, we have
$$
\int_\Omega f\, d\mu = \pm \frac{1}{24}\ .
$$
This implies that $\mu(\{\Delta_0\})=\pm 1/24$. 

We have thus shown that $\mu(\{\Delta_0\})=\pm 1/24$ for every tetrahedron $\Delta_0\in \reg(N)$ whose geometric realization is a tetrahedron of the ideal triangulation $T$
we started with. But every ideal tetrahedron of $T$ gives rise to 24 tetrahedra in $\reg(N)$, and the simplicial volume $\|N\|$ is equal to the number of tetrahedra of $T$,
hence the contribution to $\mu$ of the atomic measures supported by tetrahedra whose geometric realizations are in $T$ has total variation equal to $\|N\|$. 
Since we already know from Theorem~\ref{totalvariation} that $\|\mu\|=\|N\|$, this finally implies that
$\mu=\mu_T$, as desired.
\end{proof}

We can now conclude the proof of Theorems~\ref{characterization} and~\ref{summarize} by showing that, if $N$ is commensurable with the Gieseking manifold, then it admits
non-equidistributed efficient cycles. 

\subsection{Proof of Theorem~\ref{characterization}}
We have proved in Section~\ref{unique:sec} that, if $N$ is not commensurable with the Gieseking manifold, then every efficient cycle for $N$ is equidistributed.

Viceversa,  if $N$ is commensurable with the Gieseking manifold, then there exists a degree-$d$ covering $p\colon \hat{N}\to N$, where $\hat{N}$ admits a triangulation $\hat{T}$ by regular
ideal tetrahedra. Let $\hat{c}_i$, $i\in\mathbb{N}$, be the relative fundamental cycles for $\hat{N}$ constructed in the proof of Theorem~\ref{explicit:thm}, and for every 
$i\in\mathbb{N}$ let $c_i=p_*(c_i)/d$. The covering map $p$ induces a continuous map $\overline{S}^*_3(\hat{N})\to \overline{S}^*_3({N})$, hence a map 
$\calM(\overline{S}^*_3(\hat{N}))\to \calM(\overline{S}^*_3({N}))$. The very same proof of Theorem~\ref{explicit:thm} shows that the limit
$\mu=\lim_{i\to +\infty} \Theta(c_i) \in \calM(\overline{S}^*_3({N}))$ is an efficient cycle for $N$, and 
 is equal to the image of $\mu_{\hat{T}}$ via the map
$\calM(\overline{S}^*_3(\hat{N}))\to \calM(\overline{S}^*_3({N}))$. But the image of a purely atomic measure via a continuous map is itself purely atomic.
In particular, $\mu$ is a non-equidistributed efficient cycle for $N$, and this concludes the proof.

\subsection{Proof of Theorem~\ref{summarize}} We are only left to show that,
if $N$ is not commensurable with the Gieseking manifold and $c_i$, $i\in\mathbb{N}$ is any minimizing sequence for $N$, then
$$
\lim_{i\to +\infty} \Theta(c_i)=\frac{1}{2v_n} \mu_\eq\ .
$$
Of course, it is sufficient to show that every subsequence of $c_i$, $i\in\mathbb{N}$ admits a subsequence whose image via $\Theta$ converges to $\mu_\eq/(2v_n)$. 
However, the total variation of the measures $\Theta(c_i)$ is uniformly bounded, hence by compactness of the unit ball in $\calM(\overline{S}^*_n(N)$ every subsequence
of $\Theta(c_i)$ admits a subsequence converging to some measure $\mu\in \calM(\overline{S}^*_n(N)$. By Corollary~\ref{description} we must have
$\mu=\mu_\eq/(2v_n)$, and this concludes the proof.

\bibliography{biblio_efficient}
\bibliographystyle{alpha}

\end{document}